\renewenvironment{proof}{\noindent\textbf{Proof.}\hspace*{.3em}}{\qed\\}
\newenvironment{proof-sketch}{\noindent\textbf{Proof Sketch}
  \hspace*{0.em}}{\qed\bigskip\\}
\newenvironment{proof-idea}{\noindent\textbf{Proof Idea}
  \hspace*{0.em}}{\qed\bigskip\\}
\newenvironment{proof-of-lemma}[1][{}]{\noindent\textbf{Proof of Lemma {#1}.}
  \hspace*{0.em}}{\qed\\}
\newenvironment{proof-of-corollary}[1][{}]{\noindent\textbf{Proof of Corollary {#1}.}
  \hspace*{0.em}}{\qed\\}
\newenvironment{proof-of-theorem}[1][{}]{\noindent\textbf{Proof of Theorem {#1}.}
  \hspace*{0.em}}{\qed\\}
\newenvironment{proof-attempt}{\noindent\textbf{Proof Attempt}
  \hspace*{0.em}}{\qed\bigskip\\}
\newtheorem{theorem}{Theorem}[section]
\newtheorem{lemma}{Lemma}[section]
\newtheorem{corollary}{Corollary}[section]
\newtheorem{proposition}{Proposition}[section]
\newtheorem{definition}{Definition}[section]
\numberwithin{equation}{section}
\newcommand*{\colorboxed}{}
\def\colorboxed#1#{%
  \colorboxedAux{#1}%
}
\newcommand*{\colorboxedAux}[3]{%
  \begingroup
    \colorlet{cb@saved}{.}%
    \color#1{#2}%
    \boxed{%
      \color{cb@saved}%
      #3%
    }%
  \endgroup
}
\begin{document}

\title{Quantum Alternating Direction Method of Multipliers for Semidefinite Programming}
\makeatletter
\def\@printtitle{%
  \href{https://quantum-journal.org/?s=Quantum\%20Alternating\%20Direction\%20Method\%20of\%20Multipliers\%20for\%20Semidefinite\%20Programming\&reason=title-click}{%
    \color{quantumviolet}{%
      \parbox{\linewidth}{\raggedright\huge
        Quantum Alternating Direction Method of\\[0.2ex]
        Multipliers for Semidefinite Programming}%
    }%
  }%
}
\makeatother

\author{Hantao Nie}
\affiliation{School of Mathematical Sciences, Peking University, Beijing, China}
\email{nht@pku.edu.cn}
\author{Dong An}
\affiliation{Beijing International Center for Mathematical Research, Peking University, Beijing, China}
\email{dongan@pku.edu.cn}
\author{Zaiwen Wen}
\affiliation{Beijing International Center for Mathematical Research, Peking University, Beijing, China}
\email{wenzw@pku.edu.cn}

\maketitle

\begin{abstract}

Semidefinite programming (SDP) is a fundamental convex optimization problem with wide-ranging applications. However, solving large-scale instances remains computationally challenging due to the high cost of solving linear systems and performing eigenvalue decompositions. In this paper, we present a quantum alternating direction method of multipliers (QADMM) for SDPs, building on recent advances in quantum computing. An inexact ADMM framework is developed, which tolerates errors in the iterates arising from block-encoding approximation and quantum measurement. Within this robust scheme, we design a polynomial proximal operator to address the semidefinite conic constraints and apply the quantum singular value transformation to accelerate the most costly projection updates. We prove that the scheme converges to an $\epsilon$-optimal solution of the SDP problem under the strong duality assumption. A detailed complexity analysis shows that the QADMM algorithm achieves favorable scaling with respect to dimension compared to the classical ADMM algorithm and quantum interior point methods, highlighting its potential for solving large-scale SDPs.

\end{abstract}

\section{Introduction}
Semidefinite programming (SDP) is a pivotal class of convex optimization problems that generalizes linear programming by extending the optimization variable from vectors to matrices and requiring a positive semidefinite constraint. 
We denote the real matrix inner product $\langle \cdot, \cdot \rangle$ as $\langle A, B\rangle = \mathrm{Tr}(A^{\top}B)$. Given $m$ symmetric real matrices $A^{(i)} \in \mathbb{S}^n := \{ X \in \mathbb{R}^{n \times n}| X^\top = X \}, i=1, \ldots, m$, define the linear map $\mathcal{A}: \mathbb{R}^{n \times n} \rightarrow \mathbb{R}^m$ and its adjoint operator $\mathcal{A}^*: \mathbb{R}^m \rightarrow \mathbb{S}^n$ as
\begin{equation}
\label{eq: linear map A}
\begin{aligned}
\mathcal{A}(X)&:=\bigl(\langle A^{(1)},X\rangle,\ldots,
\langle A^{(m)},X\rangle\bigr)^{\top},\\
\mathcal{A}^*(y)&:=\sum_{i=1}^m y^i A^{(i)} ,
\end{aligned}
\end{equation}
respectively. Further given a symmetric real matrix $C \in \mathbb{S}^n$, and a vector $b \in \mathbb{R}^m$, the SDP and its dual problem are formulated as
\begin{subequations}
    \begin{align}
\min _{X \in \mathbb{S}^n} \quad \langle C, X\rangle \quad &\text { s.t. } \quad \mathcal{A}(X)=b, \quad X \succeq 0,     \label{eq: primal SDP}
\\
\min _{y \in \mathbb{R}^m, S \in \mathbb{S}^n}-b^{\top} y \quad &\text { s.t. } \quad \mathcal{A}^*(y)+S=C, \quad S \succeq 0 , \label{eq: dual SDP}
    \end{align}
\end{subequations}
respectively.
For normalization purposes, we assume $\| C \|_{\mathrm{F}} , \| A^{(i)}\|_{\mathrm{F}} \leq 1, i=1, \ldots, m$. Following~\cite{van2018improvements}, the solutions are assumed to be bounded, i.e., there exist explicit known upper bounds $R_X, R_y, R_S$ such that a triplet of primal-dual optimal solutions $(X^*, y^*, S^*)$ satisfies
$\|X^*\|_{\mathrm{F}} \leq R_X$, $\|y^*\|_1 \leq R_y$, and $\|S^*\|_{\mathrm{F}} \leq R_S$. Under this assumption, the strong duality holds for~\eqref{eq: primal SDP} and~\eqref{eq: dual SDP}, i.e., the optimal values of the primal and dual problems are equal.

The significance of solving SDPs lies in their ability to provide a versatile framework for formulating critical problems across various domains, including control theory~\cite{boyd1994linear, wolkowicz2012handbook, fares2002robust, majumdar2020recent}, statistics~\cite{banks2021local, bertsimas1998semidefinite, vandenberghe1999applications}, machine learning~\cite{lanckriet2004learning, de2006semi, fazlyab2021introduction, zhang2006ensemble}, finance~\cite{gepp2020financial, leibfritz2009successive} and quantum information science~\cite{fawzi2019semidefinite, berta2022semidefinite, mironowicz2024semi, skrzypczyk2023semidefinite, wang2018semidefinite, eldar2003semidefinite}.
Moreover, SDP provides convex relaxations for several classical combinatorial optimization problems, such as max-cut~\cite{goemans1995improved, waldspurger2015phase} and quadratic assignment problems~\cite{de2012improved, zhao1998semidefinite}.
 
Classical methods for solving SDP encompass
cutting-plane methods~\cite{lee2015faster}, matrix multiplicative weights updates (MWU)~\cite{arora2006multiplicative},
interior point methods (IPMs) and operator splitting-based methods.
The state-of-the-art cutting-plane method~\cite{lee2015faster} solves the SDP in \(
\widetilde{\mathcal{O}} \!\left(m\!\left(m^{2}+n^{\omega}+s_A\right)\right)\) time, where \(\omega<2.376\) is the matrix multiplication exponent
and \(s_A\) is the sum of the number of non-zero entries in the coefficient matrix \(A^{(i)}, i=1,\ldots,m\). The MWU framework applies mirror descent method to SDP under the relative entropy divergence and attains \(\mathcal{O}\left(mn^{2}(\frac{R_{\mathrm{Tr}X} R_y}{\epsilon_{\mathrm{abs}}})^4\right)\) complexity, where \(R_{\mathrm{Tr}X}\) and \(R_y\) are the upper bounds of $\mathrm{Tr}(X^*)$ and $\|y^*\|_1$, respectively, and \(\epsilon_{\mathrm{abs}}\) denotes absolute accuracy in the objective value and the relevant feasibility residual.
IPM is a well-established class of algorithms based on the central path and Newton's method, which is also widely used in practice, particularly for medium-scale SDPs.
The enhanced IPM~\cite{deng2024enhanced}
requires only \({O}(\sqrt{n}\log(\frac{1}{\epsilon_{\mathrm{gap}}}))\) iterations to reach \(\epsilon\)-optimality, where \(\epsilon_{\mathrm{gap}}\) is the normalized duality gap, i.e., $\epsilon_{\mathrm{gap}}$-optimality is achieved $\frac{\langle X, S\rangle}{n} \leq \epsilon_{\mathrm{gap}}$ while
each iteration costs \({O}( s_A + m^\omega+n^\omega)\). 
By applying 
alternating direction method of multipliers (ADMM)
~\cite{boyd2011distributed}
or primal-dual hybrid gradient (PDHG)~\cite{chambolle2011first} schemes to SDP, operator splitting-based methods such as SDPAD~\cite{wen2010alternating}, ABIP~\cite{lin2022alternating} and SCS~\cite{o2016conic} have gained popularity for solving large-scale instances.
As a representative example,
ABIP~\cite{deng2024enhanced} is proved to converge in a number of iterations as
$
    \mathcal{O}\left(\frac{\kappa_A^2\|\mathbf{Q}\|^2}{\sqrt{\epsilon_{\mathrm{abs}}}} \log \left(\frac{1}{\epsilon_{\mathrm{abs}}}\right)\right)
$,
where $\kappa_A$ is the condition number of the primal-dual pair of SDPs, $\mathbf{Q}$ is the coefficient matrix of the homogeneous formulation, $\|\mathbf{Q}\|^2 = \mathcal{O}(\sum_{i=1}^m \|A_i\|^2 + \|b\|^2 + \|C\|^2)$.
Each iteration in ABIP costs $\mathcal{O}(n^4)$ operations and requires one eigenvalue decomposition.

To overcome the high computational cost of solving large-scale SDPs, quantum algorithms have been proposed to leverage the power of quantum computing, mainly including quantum matrix multiplicative weights updates (QMWU)~\cite{brandao2017quantum}, quantum interior point methods (QIPM)~\cite{augustino2023quantum, mohammadisiahroudi2025quantum}, where $s$ is a sparsity parameter (maximum number of non-zero entries per row, $s \leq n$).
QMWU realizes a quadratic improvement over classic MWU, running in \(\widetilde{\mathcal{O}}\left(\sqrt{mn}\,s^{2}\right)\) queries which matches \(\Omega(\sqrt{m}+\sqrt{n})\) lower bounds on dimension~\cite{brandao2017quantum}.  The subsequent refinements reduce overheads in sparsity, error and boundedness parameters~\cite{van2018improvements, van2017quantum} and achieve complexity of $\widetilde{\mathcal{O}} \left((\sqrt{m}+\sqrt{n} \frac{R r}{\epsilon_{\mathrm{abs}}}) s (\frac{R_{\mathrm{Tr}X} R_y}{\epsilon_{\mathrm{abs}}})^4\right)$, where $R_{\mathrm{Tr}X}$ and $R_y$ have the same meaning as in MWU.
QIPM~\cite{augustino2023quantum} improves the complexity of classical interior point methods by leveraging quantum linear system solver for the Newton system, achieving a complexity of \(\widetilde{\mathcal{O}}\left(n^{3.5}\frac{\kappa_{newt}^2}{\epsilon_{\mathrm{gap}}}\right)\) quantum accesses and \(\widetilde{\mathcal{O}}\left(n^{4.5}\right)\) arithmetic operations for solving the primal-dual pair of SDPs, where \(\kappa_{newt}\) is the condition number of the Newton system.
An iterative-refinement framework for QIPM~\cite{mohammadisiahroudi2025quantum} further reduces the complexity of QIPM to \(\widetilde{\mathcal{O}}\left(n^{3.5}\kappa_0^2\right)\) quantum accesses and \(\widetilde{\mathcal{O}}\left(n^{4.5}\right)\) arithmetic operations, where \(\kappa_0\) is the condition number of the initial Newton system.
A detailed and comprehensive review of classical and quantum algorithms is presented in Appendix~\ref{app:sec: literature review}.

\subsection{Our Contributions}

In this paper, we propose a quantum alternating direction method of multipliers (QADMM) approach  that combines first-order algorithms with quantum speed-ups for eigenvalue transformation.
Our main contributions are listed as follows.

1. 
Inexact ADMM framework for SDP tolerant to quantum errors. In addition to allowing errors in subproblem solves, as is standard in the inexact ADMM literature, we also allow errors in the dual variable update step.
This design enables the algorithm to tolerate the inherent errors introduced by quantum subroutines, such as block-encodings and tomography. It is proved that the averaged iterate of this inexact ADMM scheme converges to an $\epsilon$-optimal solution with  convergence rate $\mathcal{O}(\frac{1}{\epsilon})$. This inexact ADMM framework for SDP provides a robust theoretical bridge between classical ADMM theory and practical quantum computations, ensuring that quantum acceleration can be harnessed without sacrificing convergence guarantees.

2. 
QADMM algorithm for SDP with polynomial proximal mapping. 
The core idea of our QADMM algorithm is to replace the expensive eigenvalue decomposition steps in the classical ADMM with quantum subroutines.
Unlike a traditional log-barrier or direct projection onto the positive semidefinite cone which would require costly eigenvalue decomposition at each step, we consider crafting an implicit barrier function so that its proximal operator reduces to a polynomial eigenvalue transformation. 
By implementing the subproblem update step utilizing quantum singular value transformation (QSVT) techniques under the block-encoding and QRAM data-access assumptions, we obtain a favorable dimension dependence for the overall QADMM complexity.

3. 
Complexity analysis and comparison with existing methods. We present a rigorous complexity analysis for the proposed QADMM algorithm, demonstrating its favorable theoretical performance compared to both classical and quantum state-of-the-art SDP solvers. In particular, we prove that our algorithm converges to an $\epsilon_{\mathrm{abs}}$-accurate optimal solution, where $\epsilon_{\mathrm{abs}}$ denotes absolute accuracy in the objective value and the relevant feasibility residual,
with at most
$
\widetilde{\mathcal{O}}\left(\left(m \kappa_A^2( 1 + R_y)^2 + n^2 ({\kappa_A^2}( 1 + R_y) + R_X )\right)\frac{(R_X+R_S)^3}{\epsilon_{\mathrm{abs}}^3}\right)
$
 quantum gate complexity, and
$O\left((s_A + n^2)\frac{R_X^2 + R_S^2}{\epsilon_{\mathrm{abs}}}\right)$ classical arithmetic operations, 
where $R_X$ and $R_S$ are the upper bounds of primal and dual solutions $X^*$ and $S^*$, respectively, 
$\kappa_A^2$ is the condition number of $\mathcal{A}\mathcal{A}^*$, and $s_A$ is the total number of non-zero entries in $A^{(i)}, i=1,\ldots,m$.
Compared to classical ADMM and QIPMs, our QADMM has a more favorable dimension dependence under the stated quantum data-access assumptions. A comparison of total runtime complexity of existing algorithms is summarized in Table~\ref{tab:comparison-of-total-runtime-complexity}.

\vspace{-0.1cm}
\begin{table}[h!]
    \centering
    \begin{tabular}{ll}
      \toprule
      \textbf{Algorithm} & \textbf{Complexity} \\
      \midrule
      QADMM (Algorithm~\ref{alg:QADMM-for-SDP})
        &    $
    \widetilde{\mathcal{O}}\left(n^2 ({\kappa_A^2}( 1 + R_y)^2 + R_X )\frac{(R_X+R_S)^3}{\epsilon_{\mathrm{abs}}^3}\right)
        $\\ 
      classical ADMM~\cite{wen2010alternating} 
         & \(\mathcal{O}\left(n^6 + n^4 \frac{R_X^2 + R_S^2}{\epsilon_{\mathrm{abs}}}\right)\)\\ 
      QMWU~\cite{van2018improvements, van2017quantum} 
        & \(\widetilde{\mathcal{O}}\left((n^2+n^{1.5}\,\tfrac{R_{\mathrm{Tr}X}R_y}{\epsilon_{\mathrm{abs}}})( \tfrac{R_{\mathrm{Tr}X }R_y}{\epsilon_{\mathrm{abs}}})^4\right)\) \\ 
      QIPM~\cite{mohammadisiahroudi2025quantum} 
          &\(\widetilde{\mathcal{O}}\left(n^{3.5}\kappa_{0}^2\right)\) \\
      \bottomrule
    \end{tabular}
    \vspace{0.3cm}
    \caption{Comparison of total runtime complexity (quantum gate complexity for quantum algorithms and classical arithmetic operations for classical ADMM) of Algorithms for SDP given that $m = \mathcal{O}(n^2)$ and the data matrices $A^{(i)}$ are fully dense.
    The notation $\epsilon_{\mathrm{abs}}$ denotes absolute accuracy in the objective value and the relevant feasibility residual, $R_{\mathrm{Tr}X}, R_X, R_y, R_S$ are the upper bounds of $\mathrm{Tr}(X^*)$, $\|X^*\|_{\mathrm{F}}$, $\|y^*\|_1$, and $\|S^*\|_{\mathrm{F}}$, respectively, and $\kappa_{A}, \kappa_{0}$ are the condition number of the data matrices and the initial IPM Newton system, respectively. QADMM enjoys a favorable scaling with respect to the dimension $n$ as compared to classical ADMM and QIPM. Compared to QMWU, QADMM 
    has the same order in dimension but achieves a lower-order dependence on the accuracy parameter. The comparison with QMWU depends on the size parameters of the SDP solution. QADMM is particularly favorable for instances where $\|X^\star\|_{\mathrm{F}}$ is much smaller than $\mathrm{Tr}(X^\star)$, for example diagonal SDP embeddings of linear programs or diagonally dominant perturbations of such instances. In these cases, $\mathrm{Tr}(X^\star)$ can scale linearly with $n$ while $\|X^\star\|_{\mathrm{F}}$ scales like $\sqrt{n}$, so the Frobenius-radius dependence in QADMM can be advantageous.
    }
    \label{tab:comparison-of-total-runtime-complexity}
  \end{table}

\subsection{Organization}
The remainder of this paper is organized as follows.
Section~\ref{sec:preliminaries} reviews the optimality conditions for SDPs and the quantum primitives used in our algorithm, including block-encodings, QRAM, tomography, QSVT-based eigenvalue transformation, and quantum linear solvers.
Section~\ref{sec:qadmm-for-sdp} presents the inexact ADMM framework for SDP, the QADMM algorithm, and the polynomial proximal operator used to approximate the semidefinite projection.
Section~\ref{sec:complexity-analysis} analyzes the iteration complexity and total implementation cost of QADMM.
Section~\ref{sec:numerical-simulation} reports numerical simulations for the operator-level QSVT implementation of the QADMM update.
Section~\ref{sec: conclusion} concludes the paper.
Technical details are presented in the Appendix.

\subsection{Notations}
Let $\mathbb{S}^n$ be the set of $n \times n$ symmetric matrices, $\mathbb{S}_+^n$ as the set of $n \times n$ positive semi-definite (PSD) matrices,
$- \mathbb{S}_+^n$ as the set of $n \times n$ negative semi-definite matrices.
The notations
$\mathrm{Proj}_{\mathbb{S}_+^n}(X)$ and $\mathrm{Proj}_{-\mathbb{S}_+^n}(X)$ denote the projection of $X$ onto $\mathbb{S}_+^n$ and $- \mathbb{S}_+^n$, respectively.
 Assume $X$ has the eigenvalue decomposition $X=U \Lambda U^{\top}$, where $U \in \mathbb{R}^{n \times n}$ is an orthogonal matrix and $\Lambda$ is a diagonal matrix with the eigenvalues of $X$ on the diagonal. Then $\mathrm{Proj}_{\mathbb{S}_+^n}(X)=U \Lambda^{+} U^{\top}$, $\mathrm{Proj}_{-\mathbb{S}_+^n}(X)=U \Lambda^{-} U^{\top}$,
 where $\Lambda^{+}$, $\Lambda^{-}$
are the diagonal matrices with the positive part and the negative part of the eigenvalues of $\Lambda$ on the diagonal, respectively.
The notation $\| \cdot \|$ (when subscripts are omitted) denotes the Frobenius norm of a matrix or 2-norm of a vector and $\langle \cdot, \cdot \rangle$ denotes the inner product of two matrices or vectors. 
In a Hilbert space $\mathcal{X}$, 
 the normal cone of a set $C \subset \mathcal{X}$ at a point $x$ is defined as $\mathcal{N}_C(x) = \{ v \in \mathcal{X} \mid \langle v, y - x \rangle \leq 0, \forall y \in C \}$. The proximal mapping of a function $h: \mathcal{X} \to \mathbb{R}$ is defined by
$
\mathrm{prox}_{\gamma h}^{\mathcal{X}}(V) = \arg \min_{S \in \mathcal{X}} \left\{ {h}(S) + \frac{1}{2 \gamma} \| S - V \|^2 \right\}
$. The superscript $\mathcal{X}$ is omitted when $\mathcal{X}$ is the full Euclidean space.
We denote the identity operator by $\mathrm{Id}$. For complex variables, we denote the real part and imaginary part of $x$ by $\mathrm{Re}(x)$ and $\mathrm{Im}(x)$, respectively.

\section{Preliminaries}
\label{sec:preliminaries}

\subsection{Optimality conditions}

For the primal-dual pair of SDPs, the optimality conditions
are given by the following theorem.
\begin{proposition}
    (\textbf{Optimality conditions})
    \label{prop: optimality conditions}
    $(X^*, y^*, S^*)$ is a primal-dual optimal solution of the primal-dual pair of SDPs~\eqref{eq: primal SDP} and \eqref{eq: dual SDP} if and only if the following conditions hold:
\begin{align}
  \mathcal{A}(X^*) &= b,\quad X^* \succeq 0,      \tag{Primal feasibility}\\
  \mathcal{A}^*(y^*) + S^* &= C,\quad S^* \succeq 0, \tag{Dual feasibility}\\
  X^* S^* &= 0.                                  \tag{Complementary slackness}
\end{align}
\end{proposition}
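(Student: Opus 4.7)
The plan is to prove the two directions separately, with both relying on a single weak-duality identity that follows from the adjoint relation $\langle \mathcal{A}^*(y), X\rangle = y^\top \mathcal{A}(X)$. Specifically, for any primal-feasible $X$ and dual-feasible $(y, S)$, I would first derive
\begin{equation*}
\langle C, X\rangle - b^\top y = \langle \mathcal{A}^*(y)+S, X\rangle - y^\top \mathcal{A}(X) = \langle S, X\rangle \geq 0,
\end{equation*}
where the last inequality uses that $X, S \succeq 0$ implies $\langle S, X\rangle = \mathrm{Tr}(X^{1/2} S X^{1/2}) \geq 0$. This single identity drives both directions.

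For the forward direction, suppose $(X^*, y^*, S^*)$ is primal-dual optimal. Primal and dual feasibility are immediate. Since the introduction asserts strong duality under the boundedness assumption, $\langle C, X^*\rangle = b^\top y^*$, and plugging $(X^*, y^*, S^*)$ into the identity above forces $\langle S^*, X^*\rangle = 0$. The remaining step is to upgrade this trace condition to $X^* S^* = 0$: since $X^{*1/2} S^* X^{*1/2} \succeq 0$ has zero trace, it equals zero; taking Frobenius norms gives $S^{*1/2} X^{*1/2} = 0$, which yields $X^* S^* = 0$ after multiplying by $X^{*1/2}$ and $S^{*1/2}$.

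For the reverse direction, I would assume the three KKT conditions hold and invoke the same identity with $(X^*, y^*, S^*)$ to get $\langle C, X^*\rangle - b^\top y^* = \langle S^*, X^*\rangle = \mathrm{Tr}(S^* X^*) = 0$ by complementary slackness. Then for any primal-feasible $X$, weak duality with the dual-feasible $(y^*, S^*)$ gives $\langle C, X\rangle \geq b^\top y^* = \langle C, X^*\rangle$, so $X^*$ is primal optimal; symmetrically, for any dual-feasible $(y, S)$, $-b^\top y \geq -b^\top y^* = -\langle C, X^*\rangle$ so $(y^*, S^*)$ is dual optimal.

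The only nontrivial step is the passage from the scalar condition $\langle X^*, S^*\rangle = 0$ to the matrix equation $X^* S^* = 0$, which is the standard PSD-trace-zero fact and should be flagged but not belabored. Everything else is bookkeeping with the adjoint identity and the two duality directions, which strong duality (already in force by assumption) conveniently makes symmetric.
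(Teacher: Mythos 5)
Your proof is correct. The paper states this proposition as a standard fact and does not include a proof of its own, so there is nothing to compare against; your argument --- the weak-duality identity $\langle C, X\rangle - b^\top y = \langle S, X\rangle \ge 0$ via the adjoint relation, strong duality (which the paper assumes via the boundedness of $(X^*,y^*,S^*)$) to force $\langle X^*, S^*\rangle = 0$ in the forward direction, and the standard upgrade from $\mathrm{Tr}(X^*S^*)=0$ to $X^*S^*=0$ using square roots of PSD matrices --- is exactly the canonical derivation one would expect here, and every step checks out.
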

\subsection{Block-encodings and quantum random access memory}
\label{sec: qram and block encodings}
Block-encoding is a method in quantum computing that allows one to embed an arbitrary matrix into a unitary operator. In this framework, a matrix $A$ is said to be block-encoded in a unitary $U$ if there exists a normalization factor $\alpha$ (with $\alpha \ge \|A\|$) such that
$
U = \begin{pmatrix} \frac{A}{\alpha} & \cdot \\ \cdot & \cdot \end{pmatrix} .
$
A strict definition of block-encoding is as follows. 
\begin{definition}
    (\textbf{Block-encoding})
    \label{def: block-encoding}
Let $A \in \mathbb{C}^{d\times d}$ be an arbitrary matrix, and $U$ be a unitary operator acting on an extended Hilbert space $\mathbb{C}^{2^a}\otimes \mathbb{C}^{d}$. We say that $U$ is an $(\alpha, a, \epsilon)$-block-encoding of $A$ if
$$
\left\| A - \alpha\, \left( \langle 0^{\otimes a}| \otimes I_d \right) U \left( |0^{\otimes a}\rangle \otimes I_d \right) \right\| \le \epsilon,
$$
where $\alpha \ge \|A\|$ is a normalization factor, $I_d$ is the identity operator on the $d$-dimensional space, $a$ denotes the number of ancilla qubits, $\epsilon$ is the allowable error (with $\epsilon = 0$ corresponding to an exact block-encoding, and also termed as an $(\alpha, a)$-block-encoding).
\end{definition}
Quantum random access memory (QRAM) enables simultaneous querying of multiple addresses in superposition.  Concretely, suppose a QRAM stores classical vectors $v_j\in\mathbb{R}^{d}$ and an input register is prepared in the state $\sum_{j=0}^{2^w-1}\beta_j\lvert j\rangle$.  Under the QRAM assumption, one can implement the map
$$
\sum_{j=0}^{2^w-1}\beta_j\lvert j\rangle\otimes\lvert 0\rangle
\;\longrightarrow\;
\sum_{j=0}^{2^w-1}\beta_j\lvert j\rangle\otimes\lvert v_j\rangle,
$$
in time $\widetilde{\mathcal{O}}(1)$. Throughout the paper, QRAM is used as a data-access model for loading the input matrices, vectors generated by the classical part of the algorithm, and classical descriptions of iterates reconstructed by tomography. The storage required for these data structures is polynomial in the number of stored entries; it is not the exponential-size auxiliary memory sometimes appearing in older formulations of tomography. Under the QRAM assumption, we can 
implement the block-encodings of the data matrices 
$C, A^{(i)}, i=1, \ldots, m$ in~\eqref{eq: primal SDP} using the following proposition.
\begin{proposition}
    (\textbf{Block-encoding of matrices stored in QRAM})
    \label{prop: block-encoding of matrices stored in QRAM}
    (Lemma 50.2 in~\cite{gilyen2019quantum}). Let $A \in \mathbb{C}^{d \times d}$ and $\xi>0$. 
    If $A$ is stored in a QRAM data structure, then there exist unitaries $U_R$ and $U_L$ that can be implemented in time $\mathcal{O}\left(\operatorname{poly}\left(w \log \frac{1}{\xi}\right)\right)$ such that $U_R^{\dagger} U_L$ is an $(\alpha, w+2, \xi)$-block-encoding of $A$, where $\alpha = \|A\|_{\mathrm{F}}$ and $w = \lceil \log_2 d\rceil$.
\end{proposition}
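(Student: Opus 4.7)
The plan is to construct two explicit state-preparation unitaries $U_L$ and $U_R$ from QRAM queries whose overlap reproduces the entries of $A/\|A\|_F$, verify the block-encoding identity, and bound the resulting gate count. I would begin by recalling the QRAM data structure that stores $A$: for each row $i$, a balanced binary tree of depth $w$ whose leaves hold the signed entries $A_{ij}$ and whose internal nodes hold the partial sums $\sum_{j\in\mathrm{subtree}}|A_{ij}|^2$, together with a top-level tree over rows storing the row norms $\|A_{i,\cdot}\|^2$. Under the QRAM assumption any such field can be read in superposition in $\widetilde{\mathcal{O}}(1)$ time, so each level of each tree costs $\widetilde{\mathcal{O}}(1)$ per access.

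Next, using $w+2$ ancilla qubits, I would define the two state-preparation unitaries as follows. Let $U_L$ map $|0^{w+1}\rangle|i\rangle$ to $|i\rangle\otimes\tfrac{1}{\|A_{i,\cdot}\|}\sum_j A_{ij}|j\rangle$ by descending the row-$i$ tree, applying at each of the $w$ levels a QRAM-controlled rotation whose angle is computed from the square roots of the stored partial sums (one extra ancilla qubit absorbs the sign of $A_{ij}$, and another carries the amplitude for the unused branches). Let $U_R$ map $|0^{w+1}\rangle|i\rangle$ to $\bigl(\tfrac{1}{\|A\|_F}\sum_k \|A_{k,\cdot}\||k\rangle\bigr)\otimes|i\rangle$ by the analogous traversal of the top-level row-norm tree, independently of $i$. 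A direct computation then gives
\begin{equation*}
\bigl(\langle 0^{w+2}|\otimes I\bigr)\,U_R^{\dagger} U_L\,\bigl(|0^{w+2}\rangle\otimes I\bigr)
=\frac{1}{\|A\|_F}\sum_{k,j} A_{kj}\,|k\rangle\langle j| = \frac{A}{\|A\|_F},
\end{equation*}
so in exact arithmetic $U_R^{\dagger}U_L$ is a $(\|A\|_F, w+2, 0)$-block-encoding of $A$.

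Finally, I would carry out the error and complexity analysis. Each of the $O(w)$ controlled rotations inside $U_L$ and $U_R$ must be synthesised from a discrete universal gate set to accuracy $O(\xi/(w\|A\|_F))$, which by Solovay--Kitaev costs $\mathrm{poly}(\log(w/\xi))$ elementary gates per rotation. Summing the $O(w)$ rotations and $O(w)$ QRAM queries per preparation and applying a triangle inequality on the operator norm yields $\|U_R^{\dagger}U_L - (U_R^{\dagger}U_L)^{\mathrm{ideal}}\|\le \xi/\|A\|_F$, so that the projected block differs from $A/\|A\|_F$ by at most $\xi/\|A\|_F$ in operator norm, hence an $(\|A\|_F, w+2, \xi)$-block-encoding at total gate cost $\mathcal{O}\bigl(\mathrm{poly}(w\log(1/\xi))\bigr)$.

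The main obstacle is the second step: the signs and the auxiliary amplitude qubit have to be arranged so that both $U_L$ and $U_R$ are genuine unitaries on the full ancilla space rather than partial isometries, and so that the overlap $(\langle 0^{w+2}|\otimes I)U_R^{\dagger}U_L(|0^{w+2}\rangle\otimes I)$ recovers signed entries rather than $|A_{kj}|$. This is precisely what forces the ancilla register to be $w+2$ qubits instead of $w$, and carefully bookkeeping the amplitudes on the ``unused'' branches of the tree is what dictates the specific Kerenidis--Prakash data layout underlying the proposition.
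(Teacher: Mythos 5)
This proposition is stated in the paper as an imported result (Lemma 50 of the cited Gily\'en et al.\ reference) and the paper gives no proof of its own, so there is nothing internal to compare against. Your reconstruction is the standard argument behind that lemma --- the Kerenidis--Prakash tree data structure, two state-preparation unitaries $U_L,U_R$ whose pairwise overlaps reproduce $A/\|A\|_F$, and Solovay--Kitaev synthesis of the $O(w)$ controlled rotations to accuracy $O(\xi/w)$ --- and it is correct in substance. The only blemishes are bookkeeping: with the registers as you have written them, the projected block comes out as $\sum_{i,k}\frac{A_{ik}}{\|A\|_F}\,|k\rangle\langle i|$, i.e.\ a transpose of what you display (fixed by storing $A^{\top}$ or swapping the roles of $U_L$ and $U_R$), and your unitaries are defined on $w+1$ ancillas while the projection uses $w+2$, so the sign/padding qubits you mention in the last paragraph need to appear explicitly in the definitions for the overlap identity to be literally well-typed.
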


\subsection{Tomography}
Quantum tomography is a technique used to reconstruct the quantum state of a system based on measurements. It is particularly useful for estimating the parameters of a quantum state when only partial information is available. The following proposition provides a quantum algorithm for estimating the parameters of a quantum state with high probability and low error.

\begin{proposition}(Theorem 2 in\cite{augustino2023quantum})
    \label{prop: tomography}
Let $|\psi\rangle=\sum_{j=0}^{d-1} y_j|j\rangle$ be a quantum state, $y \in \mathbb{C}^d$ the vector with elements $y_j$, and $U|0\rangle=|\psi\rangle$. There is a quantum algorithm that, with probability at least $1-\delta$, outputs $\tilde{y} \in \mathbb{R}^d$ such that $\|\mathrm{Re}(y)-\tilde{y}\|_2 \leq \varepsilon$ using $\widetilde{\mathcal{O}}\left(\frac{d}{\varepsilon} \right)$ applications of $U$ and $\widetilde{\mathcal{O}}\left(\frac{d}{\varepsilon}\right)$ indexed-SWAP or classical-write, quantum-read memory operations, together with $\widetilde{\mathcal{O}}\left(\frac{d}{\varepsilon}\right)$ additional gates. We use this linear-in-output-size form of tomography in the complexity analysis below.
\end{proposition}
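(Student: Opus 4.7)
The plan is to recover $\mathrm{Re}(y)$ coordinate by coordinate, combining a magnitude-estimation step with a sign/phase-detection step, then choose the per-coordinate precisions so that the aggregated $\ell_2$ error is $\varepsilon$ while keeping the total number of invocations of $U$ at $\widetilde{\mathcal{O}}(d/\varepsilon)$. A single basis measurement only gives $|y_j|^2$, so one must separately probe the \emph{sign} (or real phase) of each amplitude; the natural device is a Hadamard-test-type circuit whose interference term isolates $\mathrm{Re}(y_j)$.

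First I would estimate the squared magnitudes. Prepare $U\lvert 0\rangle=\lvert\psi\rangle$, measure in the computational basis, and let $\tilde p_j$ be the empirical frequency of outcome $\lvert j\rangle$. Standard Chernoff plus a union bound over $j=0,\dots,d-1$ show that $N=\widetilde{\mathcal{O}}(d/\varepsilon^2)$ shots make $|\tilde p_j-|y_j|^2|\le \varepsilon^2/d$ for all $j$ with probability $\ge 1-\delta/2$, from which $\tilde r_j:=\sqrt{(\tilde p_j)_+}$ satisfies $\sum_j(\tilde r_j-|y_j|)^2=\mathcal{O}(\varepsilon^2)$. To drop the extra $1/\varepsilon$ factor and reach the advertised $\widetilde{\mathcal{O}}(d/\varepsilon)$ scaling, I would upgrade this step to quantum amplitude estimation applied to the projector $\lvert j\rangle\!\langle j\rvert$ on the target register, which achieves precision $\varepsilon'$ with $\mathcal{O}(1/\varepsilon')$ uses of $U$ and hence matches the Heisenberg scaling per coordinate; allocating budget $\varepsilon_j^2=\varepsilon^2/d$ across the $d$ coordinates gives $\widetilde{\mathcal{O}}(d/\varepsilon)$ total.

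Next I would detect the signs of $\mathrm{Re}(y_j)$. Introduce one ancilla qubit, prepare the state
\[
\tfrac{1}{\sqrt{2}}\bigl(\lvert 0\rangle\lvert s\rangle+\lvert 1\rangle U\lvert 0\rangle\bigr), \qquad \lvert s\rangle=\tfrac{1}{\sqrt{d}}\sum_{j}\lvert j\rangle,
\]
apply a Hadamard on the ancilla, and measure the joint outcome. A short calculation gives
\[
P(\text{anc}=0,j)-P(\text{anc}=1,j)=\tfrac{1}{\sqrt{d}}\,\mathrm{Re}(y_j),
\]
so estimating this difference to precision $\varepsilon/d$ (again via amplitude estimation, costing $\widetilde{\mathcal{O}}(d/\varepsilon)$ queries of the controlled-$U$ in aggregate) yields $\widetilde{\mathrm{Re}}(y_j)$ with $|\widetilde{\mathrm{Re}}(y_j)-\mathrm{Re}(y_j)|\le \varepsilon/\sqrt{d}$, i.e.\ $\ell_2$ error $\mathcal{O}(\varepsilon)$ on the aggregate. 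Finally I would define $\tilde y_j:=\mathrm{sign}\bigl(\widetilde{\mathrm{Re}}(y_j)\bigr)\,\tilde r_j$ when $|\widetilde{\mathrm{Re}}(y_j)|\ge\tilde r_j$, and $\tilde y_j:=\widetilde{\mathrm{Re}}(y_j)$ otherwise, so that $|\tilde y_j-\mathrm{Re}(y_j)|\le |\widetilde{\mathrm{Re}}(y_j)-\mathrm{Re}(y_j)|+\bigl||y_j|-\tilde r_j\bigr|$, and a union bound over the two high-probability events together with $\sum_j\varepsilon_j^2\lesssim\varepsilon^2$ gives the claim.

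The main obstacle I anticipate is the second-step estimator, which is delicate when $|y_j|$ is small: naively the sign of $\mathrm{Re}(y_j)$ is unresolvable in that regime, but the clip against $\tilde r_j$ above saves us, since the error incurred by misidentifying the sign is bounded by $2|y_j|$ which is already within tolerance. A subsidiary technical point is the QRAM/gate overhead: preparing the reference $\lvert s\rangle$ is $\widetilde{\mathcal{O}}(1)$, so the gate count inherits the $\widetilde{\mathcal{O}}(d/\varepsilon)$ query count up to polylogarithmic factors, matching the stated bound.
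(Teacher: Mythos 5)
First, note that the paper does not prove this proposition at all: it is quoted verbatim as Theorem~2 of the cited QIPM paper of Augustino et al., which in turn imports the state-tomography algorithm of van Apeldoorn, Cornelissen, Gily\'en and Nannicini. So there is no in-paper proof to compare against, and your attempt has to stand on its own.

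It does not, because the query-complexity accounting in both of your estimation steps is off by a factor of $\sqrt{d}$. You correctly observe that naive sampling gives $\widetilde{\mathcal{O}}(d/\varepsilon^2)$, and you propose to reach $\widetilde{\mathcal{O}}(d/\varepsilon)$ by running amplitude estimation coordinate by coordinate with per-coordinate amplitude precision $\varepsilon_j=\varepsilon/\sqrt{d}$. But amplitude estimation to additive precision $\eta$ on a single amplitude costs $\Theta(1/\eta)$ applications of $U$, so each coordinate costs $\Theta(\sqrt{d}/\varepsilon)$ and the total is $\Theta(d^{3/2}/\varepsilon)$, not $\widetilde{\mathcal{O}}(d/\varepsilon)$. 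The same problem recurs in the sign step: your Hadamard test against the uniform reference $\lvert s\rangle$ attenuates the signal to $\tfrac{1}{\sqrt d}\mathrm{Re}(y_j)$, forcing you to estimate each of the $2d$ outcome probabilities to additive precision $\varepsilon/d$; per-coordinate amplitude estimation then costs $\mathcal{O}(\sqrt{P(a,j)}\,d/\varepsilon)$ per outcome, and $\sum_{a,j}\sqrt{P(a,j)}\le\sqrt{2d}$ again yields $\mathcal{O}(d^{3/2}/\varepsilon)$ in aggregate. No reallocation of the error budget across coordinates fixes this in the worst case (e.g.\ $|y_j|^2=1/d$ uniformly). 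The true $\widetilde{\mathcal{O}}(d/\varepsilon)$ bound requires genuinely different machinery --- estimating all coordinates \emph{simultaneously} rather than independently (phase estimation on a combined unitary, or iterative refinement of a stored reference state) --- and this is also what explains the exotic classical-write QRAM of size $\widetilde{\mathcal{O}}(2^{d/\varepsilon})$ in the statement, which your argument never accounts for (preparing $\lvert s\rangle$ in $\widetilde{\mathcal{O}}(1)$ time does not touch it). A secondary, non-fatal issue: since $\tilde r_j$ estimates $|y_j|$ rather than $|\mathrm{Re}(y_j)|$, your magnitude step contributes essentially nothing to the final estimator of $\mathrm{Re}(y)$ when $y$ is genuinely complex; the Hadamard-test estimate alone carries the result, which makes the first stage redundant rather than wrong.
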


\subsection{Eigenvalue transformation} \label{sec: eigenvalue transformation}

Given a real symmetric matrix $X \in \mathbb{S}^n$ with eigenvalue decomposition $X = U \Lambda U^{\top}$, where $U \in \mathbb{R}^{n \times n}$ is an orthogonal matrix and $\Lambda$ is a real diagonal matrix. 
Further given some function $g: \mathbb{R} \rightarrow \mathbb{R}$, we define the eigenvalue transformation operator 
\footnote{
    The operator    
$\boldsymbol{g}$ is also called a spectral operator in the literature of optimization~\cite{ding2018spectral}.
} with respect to $g$ using boldsymbol $\boldsymbol{g}$ as
$$
\boldsymbol{g}(X) = U \mathrm{diag}(g(\lambda_1), \ldots, g(\lambda_n)) U^{\top}.
$$
As a special case, $g(x) = \max\{0, x\}$ indicates $\boldsymbol{g}(X) = 
\mathrm{Proj}_{\mathbb{S}_+^n}(X)$, which can be computed as $U \Lambda^{+} U^{\top}$ with $\Lambda^{+} = \mathrm{diag}\left(\max\{0, \lambda_1\}, \ldots, \max\{0, \lambda_n\}\right)$.
For a general function $g$, 
the classical way to compute $g(X)$ requires computing the eigenvalue decomposition of $X$ and then applying the function $g$ to the eigenvalues. On a quantum computer, given the appropriate block-encoding access model, QSVT implements polynomial eigenvalue transformations with query complexity proportional to the polynomial degree, as shown in the following proposition.

\begin{proposition}
    \label{prop: quantum speedup for spectral operator}
    (Theorem 56 in~\cite{gilyen2019quantum})
    Suppose that $U$ is an $(\alpha, a, \varepsilon)$-encoding of a Hermitian matrix $A$. If $\delta \geq 0$ and $P_{\Re} \in \mathbb{R}[x]$ is a polynomial satisfying that
$\left|P_{\Re}(x)\right| \leq \frac{1}{2}, \forall x \in [-1, 1]$
Then there is a quantum circuit $\tilde{U}$, which is an $(1, a+2,4 \deg(P_{\Re}) \sqrt{\varepsilon / \alpha}+\delta)$-encoding of $P_{\Re}(A / \alpha)$, and consists of $\deg(P_{\Re})$ applications of $U$ and $U^{\dagger}$ gates, a single application of controlled-$U$ and $\mathcal{O}((a+1) \deg(P_{\Re}))$ other one- and two-qubit gates. Moreover a description of such a circuit with a classical computer can be computed in time $\mathcal{O}(\operatorname{poly}(\deg(P_{\Re}), \log (1 / \delta)))$.
\end{proposition}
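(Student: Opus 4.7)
The plan is to establish this result via the quantum singular value transformation (QSVT) framework, constructing the desired polynomial transformation from alternating applications of $U$, $U^{\dagger}$, and ancilla-controlled phase rotations. First I would treat the idealized case of an exact block-encoding ($\varepsilon=0$). Writing $A=\sum_j \lambda_j |\psi_j\rangle\langle\psi_j|$, one observes that $U$ admits a two-dimensional invariant subspace for each eigenvector, on which $U$ acts as an $SU(2)$ rotation by angle $\arccos(\lambda_j/\alpha)$. Interleaving $U$, $U^{\dagger}$, and the reflection $R=(2|0^{\otimes a}\rangle\langle 0^{\otimes a}|-I)\otimes I$ with single-qubit phase rotations $e^{i\phi_k R}$ therefore implements, on each invariant subspace, a product of $SU(2)$ rotations whose $(0,0)$ entry is a fixed-parity polynomial in $\lambda_j/\alpha$ of degree at most $\deg(P_{\Re})$. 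A classical phase-angle analysis identifies the achievable polynomials with those satisfying standard QSVT parity/boundedness constraints, and a procedure due to Haah / Ying / Chao shows that suitable $\{\phi_k\}$ can be computed to realize any target polynomial of fixed parity.

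Next I would remove the parity restriction and pass from complex to real polynomials. Decomposing $P_{\Re}=P_{\mathrm{even}}+P_{\mathrm{odd}}$ and combining the two parities via a linear-combination-of-unitaries trick uses one extra ancilla qubit; a second extra ancilla is used to project onto the real part of a possibly complex QSVT output, accounting for the $+2$ in the ancilla count $a+2$. The gate count then follows by bookkeeping: $\deg(P_{\Re})$ uses of $U$ or $U^{\dagger}$, one controlled-$U$ for the LCU step, and $\mathcal{O}((a+1)\deg(P_{\Re}))$ elementary gates for implementing the projector-controlled phases on the $a$-qubit ancilla register.

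The core analytic step is the error propagation, which I expect to be the main obstacle. Writing $U=U_{0}+E$ with $\|E\|\le \varepsilon/\alpha$, where $U_{0}$ is a genuine block-encoding of some nearby Hermitian matrix $A'/\alpha$, a crude expansion of the QSVT circuit yields the linear bound $\mathcal{O}(\deg(P_{\Re})\,\varepsilon/\alpha)$. The sharper $4\deg(P_{\Re})\sqrt{\varepsilon/\alpha}$ dependence claimed in the statement requires a robustness lemma for block-encodings of polynomial transformations: one shows that the discrepancy between the QSVT outputs of two block-encodings is controlled by a square-root of the input block-encoding error, exploiting the unitarity of the circuit and the $2\times 2$ block structure via a triangle-inequality argument on the associated isometries, together with the Markov-brothers-type bound $\|P_{\Re}'\|_{\infty}=\mathcal{O}(\deg(P_{\Re})^{2})$ afforded by the assumption $|P_{\Re}|\le 1/2$ on $[-1,1]$. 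Obtaining the precise constant $4$ and the $\sqrt{\cdot}$ scaling, rather than a naive linear accumulation, is where the technical work concentrates.

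Finally, the additive $\delta$ and the classical preprocessing time $\mathcal{O}(\mathrm{poly}(\deg(P_{\Re}),\log(1/\delta)))$ come from converting the polynomial specification into a sequence of phase factors $\{\phi_k\}$. This reduces to root-finding for an auxiliary Laurent polynomial (or, equivalently, a convex optimization problem over $SU(2)$ sequences), which is known to be solvable to precision $\delta$ within the stated polynomial cost; the resulting numerical error in the phase angles propagates additively into the block-encoding error, contributing the $+\delta$ term. Once the robustness lemma underlying the $\sqrt{\varepsilon/\alpha}$ bound is in place, the gate count, ancilla bookkeeping, and LCU/parity decomposition are comparatively routine and can be assembled directly from the construction in the first two steps.
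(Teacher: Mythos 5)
This proposition is not proved in the paper at all: it is quoted verbatim as Theorem 56 of the cited reference (Gily\'en--Su--Low--Wiebe), so there is no in-paper proof to compare against. Your sketch is essentially a faithful outline of the proof given in that reference---qubitization of the block-encoding into $SU(2)$ invariant subspaces, quantum signal processing phase factors computed classically to precision $\delta$, an LCU step to combine the even and odd parity parts (accounting for the extra ancillas and the single controlled-$U$), and a robustness lemma for singular value transformations to obtain the $4\deg(P_{\Re})\sqrt{\varepsilon/\alpha}$ error term. One small inaccuracy: the $\sqrt{\varepsilon/\alpha}$ scaling in the robustness lemma does not come from a Markov-brothers bound on $\|P_{\Re}'\|_\infty$; it comes from the fact that dilating a contraction $A/\alpha$ to a unitary is only H\"older-$\tfrac12$ continuous (the off-diagonal block $\sqrt{I-(A/\alpha)(A/\alpha)^\dagger}$ changes by $O(\sqrt{\|\Delta A\|/\alpha})$), after which the error accumulates linearly over the $\deg(P_{\Re})$ alternating applications. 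Since the statement is an imported black-box result, reproving it is not required for the paper's correctness, but your outline would serve if one wished to make the paper self-contained.
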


\subsection{Quantum linear solver}
\label{sec: quantum linear solver}

If block-encoding of a Hermitian matrix $H$ is provided, we can use quantum linear solver (QLS) to solve $H {x} = {b}$ as provided in the following proposition.

\begin{proposition}
    \label{prop: quantum linear solver}
    (\textbf{Quantum linear solver}, Theorem 30, Corollary 32 in~\cite{chakraborty2018power})  Let $r \in (0, \infty), \kappa \geq 2$ and $H$ be a Hermitian matrix such that its non-zero eigenvalues lie in $[-1,-1 / \kappa] \cup [1 / \kappa, 1]$. Suppose that
$    
\xi=o\left(\frac{\delta}{\kappa^2 \log^3 \frac{\kappa^2}{\delta}}\right)
$
and $U$ is an $(\alpha, a, \xi)$-block-encoding of $H$ that can be implemented in time $T_U$. Suppose further that we can prepare a state $|v\rangle$ that is in the image of $H$ in time $T_v$. 

\noindent
1. For any $\delta$, we can output a state that is $\delta$-close to $H^{-1}|b\rangle /\left\|H^{-1} b\right\|$ in time
$$
\widetilde{\mathcal{O}}\left(\kappa\left(\alpha\left(a+T_U\right) +T_v\right)\right) .
$$

\noindent
2.For any $\delta$, we can output $\tilde{e}$ such that
$
(1-\delta) \| H^{-1}|b\rangle\|\leq \tilde{e} \leq(1+\delta)\| H^{-1}|b\rangle \|
$
in time
$$
\widetilde{\mathcal{O}}\left(\frac{\kappa}{\delta}\left(\alpha\left(a+T_U\right) +T_v\right) \right) .
$$
\end{proposition}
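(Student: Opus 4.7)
The proposition is the standard matrix-inversion result obtained by combining the quantum singular value transformation (QSVT) with amplitude amplification and amplitude estimation. My plan is to approximate $1/x$ by a polynomial, realize it as a block-encoded operator on $H$ via Proposition~\ref{prop: quantum speedup for spectral operator}, and then extract the state $H^{-1}|v\rangle$ (or its norm) from the good-ancilla branch of the resulting circuit.

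First, I would invoke the Chebyshev-truncation polynomial approximation of $x\mapsto 1/x$ due to Childs--Kothari--Somma: there exists an odd polynomial $P\in\mathbb{R}[x]$ of degree $d=\mathcal{O}(\kappa\log(1/\delta))$, appropriately rescaled so that $|P(x)|\le 1/2$ on $[-1,1]$, that approximates $1/(2\kappa x)$ to precision $\mathcal{O}(\delta/\kappa)$ uniformly on $[-1,-1/\kappa]\cup[1/\kappa,1]$. This is precisely the polynomial object required by Proposition~\ref{prop: quantum speedup for spectral operator}.

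Second, feeding the $(\alpha,a,\xi)$-block-encoding $U$ and the polynomial $P$ into Proposition~\ref{prop: quantum speedup for spectral operator} produces a circuit $\tilde U$ that is a $(1,a+2,\eta)$-block-encoding of $P(H/\alpha)\approx \alpha H^{-1}/(2\kappa)$, with propagated error $\eta=\mathcal{O}(d\sqrt{\xi/\alpha}+\delta)$. The hypothesis $\xi=o(\delta/(\kappa^2\log^3(\kappa^2/\delta)))$ is chosen precisely so that $\eta=\mathcal{O}(\delta)$ after this propagation. A single invocation of $\tilde U$ uses $d$ calls of $U,U^\dagger$ and $\mathcal{O}(da)$ additional one- and two-qubit gates, hence cost $\mathcal{O}(d(a+T_U))=\widetilde{\mathcal{O}}(\kappa(a+T_U))$ per call.

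Third, I would apply $\tilde U$ to $|0^{\otimes(a+2)}\rangle|v\rangle$ (with $|v\rangle$ prepared in time $T_v$); the good-ancilla branch is then (approximately) proportional to $H^{-1}|v\rangle$, with success amplitude $\Theta(\|H^{-1}|v\rangle\|/(\alpha\kappa))=\Omega(1/(\alpha\kappa))$ because $|v\rangle$ lies in the image of $H$ and $\|H\|\le\alpha$. Fixed-point amplitude amplification then boosts the success probability to a constant using $\mathcal{O}(\alpha)$ rounds (the $\kappa$ factor having already been absorbed into the polynomial degree), producing a state $\delta$-close to $H^{-1}|v\rangle/\|H^{-1}|v\rangle\|$. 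Consolidating factors gives the Part~1 runtime $\widetilde{\mathcal{O}}(\kappa(\alpha(a+T_U)+T_v))$. For Part~2, I would instead run quantum amplitude estimation on the good-branch amplitude, which yields a $(1\pm\delta)$-multiplicative estimate of $\|H^{-1}|v\rangle\|$ at an additional multiplicative $\mathcal{O}(1/\delta)$ cost, matching the stated bound.

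The main obstacle is the careful chaining of three distinct error sources: the polynomial approximation of $1/x$, the QSVT propagation of the block-encoding error $\xi$, and the inherent imprecision of amplitude amplification/estimation acting on an already perturbed state. The delicate cubic-logarithmic dependence on $\kappa^2/\delta$ in the hypothesis on $\xi$ is engineered exactly to keep these compounded errors under $\mathcal{O}(\delta)$ in the final output.
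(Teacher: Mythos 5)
The paper does not prove this proposition; it is imported verbatim as Theorem~30 and Corollary~32 of the cited reference of Chakraborty, Gily\'en and Jeffery, so your sketch can only be judged against the known proof there. Your overall template (polynomial approximation of $1/x$, QSVT via Proposition~\ref{prop: quantum speedup for spectral operator}, then amplification/estimation on the good ancilla branch) is the right family of ideas, but as written it does not reach the stated complexity, and the arithmetic of the amplification step is internally inconsistent.

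Concretely: the block-encoding gives access to $H/\alpha$, whose nonzero eigenvalues lie in $[-1/\alpha,-1/(\alpha\kappa)]\cup[1/(\alpha\kappa),1/\alpha]$, so the Childs--Kothari--Somma polynomial must invert down to $1/(\alpha\kappa)$ and has degree $d=\mathcal{O}(\alpha\kappa\log(1/\delta))$, not $\mathcal{O}(\kappa\log(1/\delta))$. The resulting block-encoded operator is $\approx \tfrac{1}{2\kappa}H^{-1}$, so the good-branch amplitude on $|v\rangle$ is $\Theta(\|H^{-1}|v\rangle\|/\kappa)=\Omega(1/\kappa)$ in the worst case, and ordinary (fixed-point) amplitude amplification therefore needs $\mathcal{O}(\kappa)$ rounds --- your claim that an amplitude of $\Omega(1/(\alpha\kappa))$ can be boosted to constant in $\mathcal{O}(\alpha)$ rounds is not how amplitude amplification scales. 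Multiplying $\mathcal{O}(\kappa)$ rounds by the per-round cost $d(a+T_U)+T_v$ gives $\widetilde{\mathcal{O}}(\alpha\kappa^2(a+T_U)+\kappa T_v)$, which is a full factor of $\kappa$ worse than the stated bound $\widetilde{\mathcal{O}}(\kappa(\alpha(a+T_U)+T_v))$; the same issue afflicts your Part~2 via amplitude estimation. The missing ingredient is \emph{variable-time} amplitude amplification and estimation (Ambainis' technique as adapted by Childs--Kothari--Somma and by Chakraborty--Gily\'en--Jeffery), which runs the inversion as a sequence of stopped phase-estimation/inversion stages of geometrically increasing precision so that well-conditioned components of $|v\rangle$ are processed cheaply; this is exactly what collapses the $\kappa^2$ to $\kappa$ in the cited Theorem~30 and Corollary~32. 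Without it, your argument proves a strictly weaker statement.
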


\section{Quantum alternating direction method of multipliers for SDP}
\label{sec:qadmm-for-sdp}
\subsection{Inexact ADMM framework for SDP}
ADMM is a method designed to solve constrained problems by breaking them into simpler subproblems. A comprehensive review of ADMM for general composite problem is provided in Appendix~\ref{app:sec: review of ADMM}.
For dual SDP~\eqref{eq: dual SDP}, we consider encoding the constraint $S \succeq 0$ via a penalty function
$
\boldsymbol{h}: \mathbb{S}^n \to \mathbb{R}
$.
When $\boldsymbol{h}(S) = \boldsymbol{h_0}(S) := \delta_{\mathbb{S}_+^n}(S)$,~\eqref{eq: dual SDP 2} is equivalent to~\eqref{eq: dual SDP}. Smooth functions are also considered such as $\boldsymbol{h}(S) = \mu \log\det(S)$ in interior point methods, where $\mu > 0$ is a parameter that controls the duality gap. 
Denote 
$\mathcal{X} := \{X \in \mathbb{S}^n| X \succeq 0, \|X\|_{\mathrm{F}} \leq R_X\}$, $\mathcal{Y} := \{y \in \mathbb{R}^{m}| \|y\|_1 \leq R_y\}$, $\mathcal{S} := \{S \in \mathbb{S}^n| S \succeq 0, \| S \|_{\mathrm{F}} \leq R_S\}$ as the feasible sets of $X$, $y$ and $S$, respectively.
Define $f(y) := -b^T y$. 
Then~\eqref{eq: dual SDP} is rewritten as 
\begin{equation}
\label{eq: dual SDP 2}
\min_{y\in\mathcal{Y},S \in \mathcal{S}}  f(y) + \boldsymbol{h}(S) \qquad \text{s.t.}\quad A^*(y) + S = C~.
\end{equation}
This formulation is amenable to ADMM splitting by associating $f(y)$ with the $y$-update and $\boldsymbol{h}(S)$ with the $S$-update and treating the equation $A^*(y)+S=C$ as the coupling constraint.
The augmented Lagrangian function for the dual SDP is
\begin{equation}
    \label{eq: augmented Lagrangian for SDP}
\mathcal{L}_\gamma(X, y, S):=-b^{\top} y+ \boldsymbol{h}(S) + \langle X, \mathcal{A}^*(y)+S-C\rangle+\frac{1}{2 \gamma}\left\|\mathcal{A}^*(y)+S-C\right\|_{\mathrm{F}}^2,
\end{equation}
with some fixed $\gamma > 0$.
With this augmented Lagrangian function, ADMM iterates as follows~\cite{wen2010alternating}:
\begin{subequations}
    \begin{align}
         y^{k+1} & := \arg \min _{y \in \mathcal{Y}} \mathcal{L}_\gamma\left(X^k, y, S^k\right) =-\left(\mathcal{A A}^*\right)^{-1}(\gamma(\mathcal{A}(X^k)-b)+\mathcal{A}(S^k-C)), \label{eq: y in classical ADMM for SDP} \\
         S^{k+1} & := \arg \min _{S \in \mathcal{S}} \mathcal{L}_\gamma\left(X^k, y^{k+1}, S\right) =\mathrm{prox}_{\gamma \boldsymbol{h}}^{\mathcal{S}}\left(C-\mathcal{A}^*(y^{k+1})-\gamma X^k \right), \label{eq: S in classical ADMM for SDP}\\
         X^{k+1} & := X^k+\frac{1}{\gamma}(\mathcal{A}^*\left(y^{k+1}\right)+S^{k+1}-C) 
          = - \frac{1}{\gamma}(\mathrm{Id} - \mathrm{prox}_{\gamma \boldsymbol{h}}^{\mathcal{S}})\left(C-\mathcal{A}^*(y^{k+1})-\gamma X^k\right) 
         .
         \label{eq: X in classical ADMM for SDP} 
    \end{align}
\end{subequations}
The matrix $\mathcal{A A}^* \in \mathbb{R}^{m \times m}$ in~\eqref{eq: y in classical ADMM for SDP} is a symmetric positive definite matrix with its $(i, j)$-th element being $\langle A^{(i)}, A^{(j)}\rangle$. 
When $\boldsymbol{h}(S) = \boldsymbol{h_0}(S)$, the proximal operator $\mathrm{prox}_{\gamma \boldsymbol{h_0}}^{\mathcal{S}} = \mathrm{Proj}_{\mathcal{S}}$.
We now introduce an inexact ADMM framework for SDP, which allows errors in $y$-update and $S$-update.
By introducing an intermediate variable
$
    V^{k+1} = C-\mathcal{A}^*(y^{k+1})-\gamma X^k
$ and error terms, 
~\eqref{eq: y in classical ADMM for SDP}-\eqref{eq: X in classical ADMM for SDP} are splitted into several steps:
\begin{subequations}
    \begin{align}
        \hat y^{k+1} &=-\left(\mathcal{A A}^*\right)^{-1}(\gamma(\mathcal{A}(X^k)-b)+\mathcal{A}(S^k-C)), \label{eq:qadmm-hat-y} \\
         \tilde y^{k+1} & := \hat y^{k+1} + \Delta \tilde y^{k+1} \label{eq:qadmm-tilde-y}, \quad  \|\Delta \tilde y^{k+1}\|_1 \leq \delta_{\tilde y}, \\
         y^{k+1} & := \mathrm{Proj}_{\mathcal{Y}}(\tilde y^{k+1} + \Delta y^{k+1}) \label{eq:qadmm-y}, \quad  \|\Delta y^{k+1}\|_1 \leq \delta_y, \\
        \hat V^{k+1} &:= C-\mathcal{A}^*(\tilde y^{k+1})-\gamma X^k, \label{eq:qadmm-hat-V} \\
        \tilde V^{k+1} & := \hat V^{k+1} + \Delta V^{k+1} \label{eq:qadmm-tilde-V} , \quad  \|\Delta V^{k+1}\|_{\mathrm{F}} \leq \delta_V, \\
        \tilde S^{k+1} & := \mathrm{prox}_{\gamma \boldsymbol{h}}(\tilde V^{k+1}) ,\label{eq:qadmm-tilde-S} \\
       S^{k+1} & := \mathrm{Proj}_{\mathcal{S}} (\tilde S^{k+1} + \Delta S^{k+1}) \label{eq:qadmm-S}, \quad  \|\Delta S^{k+1}\|_{\mathrm{F}} \leq \delta_S, \\
       \tilde X^{k+1} & := -\frac{1}{\gamma}(\mathrm{Id} - \mathrm{prox}_{\gamma \boldsymbol{h}})(\tilde V^{k+1}) ,\label{eq:qadmm-tilde-X} \\
       X^{k+1} & := \mathrm{Proj}_{\mathcal{X}}(\tilde X^{k+1} + \Delta X^{k+1}) \label{eq:qadmm-X}, \quad  \|\Delta X^{k+1}\|_{\mathrm{F}} \leq \delta_X.
    \end{align}
\end{subequations}
The variables with tilde notations $\tilde y^{k+1}, \tilde{S}^{k+1}, \tilde{X}^{k+1}$ stand for quantum representations and variables without any notation $y^{k+1}, S^{k+1}, X^{k+1}$ stand for classical representations. Additionally, variables with hat notations $\hat y^{k+1}, \hat V^{k+1}$ are intermediate variables.
The error terms $\Delta \tilde y^{k+1}, \Delta y^{k+1}, \Delta V^{k+1}, \Delta S^{k+1}, \Delta X^{k+1}$ in~\eqref{eq:qadmm-tilde-y}-\eqref{eq:qadmm-X}
are used to account for the inexactness, which are bounded by $\delta_{\tilde y}, \delta_y, \delta_V, \delta_S, \delta_X$, respectively.

\subsection{QADMM for SDP}
The core idea of QADMM is to alleviate the computational bottlenecks of classical solvers by substituting numerically intensive operations with quantum subroutines.
The resulting algorithm is a quantum-classical hybrid algorithm by design. The classical processor evaluates inexpensive linear maps, simple bounded-set projections, normalization factors, and the classical descriptions needed for output and for the next iteration. The quantum processor is used for the two bottleneck operations: solving the linear system in the $y$-update and applying polynomial spectral transformations in the $S$- and $X$-updates. In particular, the PSD spectral component in the $S$- and $X$-updates is supplied by QSVT, while the classical post-processing only enforces simple radius bounds and stores the reconstructed iterates.
Intuitively, the penalty function $\boldsymbol{h}(S)$ in~\eqref{eq: dual SDP 2} is chosen
so that the form of its proximal operator $\mathrm{prox}_{\gamma h}(V)$ admits an efficient quantum implementation. 
The main procedure of the QADMM algorithm for SDP is described as follows. 

\noindent
1.
$y$-update~\eqref{eq: y in classical ADMM for SDP}.
First we compute 
\begin{equation}
\label{eq:qadmm-u}
u^{k+1}= \gamma(\mathcal{A}(X^k)-b)+\mathcal{A}(S^k-C),
\end{equation}
classically and produce a corresponding quantum state $ | \hat{u}^{k+1} \rangle  = \frac{u^{k+1}}{\|u^{k+1}\|_2 }$. 
Since $u^{k+1}\in\mathbb{R}^m$ has already been computed classically, preparing $|\hat u^{k+1}\rangle$ is done by loading this vector into the QRAM/state-preparation data structure and normalizing it. This contributes $\mathcal{O}(m)$ classical writes and standard quantum-read access per iteration, which is included in the classical data-loading cost and does not require an additional SDP-scale eigendecomposition.
Next we invoke QLS reviewed in Section~\ref{sec: quantum linear solver} to solve $ \mathcal{A}\mathcal{A}^* \frac{\hat y^{k+1}}{\| u^{k+1}\|} = - | \hat u^{k+1} \rangle$
thereby obtaining $ | v_y^{k+1} \rangle, e_y^{k+1}$ such that
\begin{equation}
    \label{eq: error bound for y}
\left\| | v_y^{k+1} \rangle - \frac{\hat y^{k+1}}{\|\hat y^{k+1}\|_2} \right\|_2 \leq \delta_{v_y}, \quad
( 1 - \delta_{e_y}) \left\| \hat{y}^{k+1} \right\|_2 \leq  e_y^{k+1} \leq ( 1 + \delta_{e_y}) \left\| \hat{y}^{k+1} \right\|_2,
\end{equation}
where $\delta_{v_y}, \delta_{e_y} > 0$.
This yields the quantum representation  $\tilde{y}^{k+1} = e_y^{k+1} | v_y^{k+1} \rangle \approx \hat y^{k+1}$ and the error is bounded by $\delta_{\tilde y} = \delta_{e_y} + \delta_{v_y} \|\hat y^{k+1}\|_2$, i.e.,
\begin{equation}
    \label{eq: error bound Delta y}
\Delta y^{k+1} 
\leq \left\| e_y^{k+1} | v_y^{k+1} \rangle - \|\hat y^{k+1}\|_2 | v_y^{k+1} \rangle \right\|_2 + \left\| \| \hat y^{k+1}\|_2 | v_y^{k+1} \rangle  - \hat y^{k+1} \right\| \leq \delta_{e_y} + \delta_{v_y}\| \hat y^{k+1}\|_2.
\end{equation}
Finally, we perform tomography on $\tilde y^{k+1}$ to reconstruct a classical vector and then project it onto the feasible set $\mathcal{Y}$.

\noindent
2. 
$V$-update~\eqref{eq:qadmm-tilde-y}-\eqref{eq:qadmm-hat-V}.
Equation~\eqref{eq:qadmm-hat-V} is evaluated on the quantum computer via the linear-combination-of-unitaries (LCU) technique.  At the start of iteration $k$, the matrix $X^{k}$ already resides in quantum memory. Let the 
$(\alpha_i, \lceil \log(n) \rceil + 2, \xi)$-block-encodings of $A^{(1)}, \ldots, A^{(m)}, C, X^{k}$ be denoted by $U_{A^{(i)}}, U_C, U_{X^{k}}$, where $\alpha_i = \|A^{(i)}\|_{\mathrm{F}}$ and $\alpha_{m+1} = \|C\|_{\mathrm{F}}$, $\alpha_{m+2} = \|X^k\|_{\mathrm{F}}$.
Then 
by the LCU lemma (Lemma 7.9 in~\cite{lin2022lecture}), we compute a block-encoding of the matrix
\begin{equation}
\label{eq: lcu}
\frac{1}{\sum_{i=1}^{m} \alpha_i | y^{k+1}_i| + \alpha_{m+1} +\gamma \alpha_{m+2}} \left(\sum_{i=1}^m \alpha_i y^{k+1}_i U_{A^{(i)}}  + 
\alpha_{m+1} U_{C} - \gamma \alpha_{m+2}  U_{X^k}\right)
\end{equation}
This produces the quantum representation $\tilde{V}^{k+1} = e_V^{k+1} v_V^{k+1}$.

\noindent Here $e_V^{k+1} = \sum_{i=1}^{m} \alpha_i |y^{k+1}_i| \allowbreak + \alpha_{m+1} \allowbreak + \gamma \alpha_{m+2}$ and
\[
v_V^{k+1}
= \frac{1}{e_V^{k+1}}\left( \sum_{i=1}^m y^{k+1}_i A^{(i)} + C - \gamma X^k \right).
\]
The approximation error incurred by the block-encoding and LCU is denoted by $\Delta V^{k+1}$ in~\eqref{eq:qadmm-tilde-V}.

\noindent
3. 
$S$-update~\eqref{eq:qadmm-tilde-V}-\eqref{eq:qadmm-S}.
We replace $\mathrm{prox}_{\gamma \boldsymbol{h}}$ in~\eqref{eq:qadmm-tilde-S} with a polynomial matrix function.
Let $g_1: [-1, 1] \to\mathbb{R}$ be a polynomial of degree $d$ and $\boldsymbol{g}_1$ be the corresponding spectral operator. Choose a constant $c_1\geq \|g_1\|_{\infty,[-1,1]}$ with $c_1=\Theta(1)$ and set $p_1=g_1/(2c_1)$. 
The step~\eqref{eq:qadmm-tilde-S} is computed by a spectral transformation of the form 
\begin{equation}
    \label{eq: proximal operator}
\tilde S^{k+1} =  e_V^{k+1} \boldsymbol{g}_1( v_V^{k+1}) = 2c_1 e_V^{k+1} \boldsymbol{p}_1(v_V^{k+1}) .
\end{equation}
The block-encoding of $v_V^{k+1}$ is computed in the previous step.
Crucially, we will compute the scaled polynomial $\boldsymbol{p}_1$ using QSVT introduced in Section~\ref{sec: eigenvalue transformation}; the final factor $2c_1e_V^{k+1}$ is restored in the classical reconstruction. The complete procedure of $S$-update is as follows:
Using QRAM and QSVT, we can design a quantum circuit to compute the block-encoding of $\boldsymbol{p}_1(v_V^{k+1})$.
Tomography then reconstructs a classical approximation of $2c_1e_V^{k+1}\boldsymbol{p}_1(v_V^{k+1})=e_V^{k+1}\boldsymbol{g}_1(v_V^{k+1})$, which is the intermediate matrix $\tilde S^{k+1}$ in~\eqref{eq:qadmm-tilde-S}. The notation $S^{k+1}=\mathrm{Proj}_{\mathcal{S}}(\tilde S^{k+1}+\Delta S^{k+1})$ in~\eqref{eq:qadmm-S} is used in the convergence analysis to encode boundedness and feasibility of the ideal iterate. In the quantum implementation, the PSD component is approximated by the QSVT polynomial spectral map; after tomography one stores the reconstructed matrix and, if needed, applies only inexpensive Frobenius-radius clipping. The deviation from the exact PSD projection is counted in $\Delta \tilde S^{k+1}$ and $\Delta S^{k+1}$, so no additional classical eigendecomposition is hidden in this step.

4. $X$-update~\eqref{eq:qadmm-tilde-X}-\eqref{eq:qadmm-X}. This step proceeds analogously to the S-update.
Define $\boldsymbol{g}_2$ as a polynomial replacing $\mathrm{Id} - \mathrm{prox}_{\gamma \boldsymbol{h}}$ in~\eqref{eq:qadmm-tilde-X}; for QSVT implementation we use the same constant rescaling $p_2=g_2/(2c_2)$ with $c_2=\Theta(1)$.
We compute $V^{k+1}$ via step 2, then employ QSVT and tomography to reconstruct $X^{k+1}$. 
Finally, we use QRAM to store $X^{k+1}$ in quantum memory to serve as input for $V$-update in the subsequent iteration.
The complete procedure is summarized in Algorithm~\ref{alg:QADMM-for-SDP}. Implementation details are provided in Appendix~\ref{app:sec: implementation details for QADMM}.

\begin{algorithm}[!htbp]
    \centering
    \caption{QADMM for SDP}
    \label{alg:QADMM-for-SDP}
    \begin{algorithmic}
        \STATE \textbf{Input:} Data $C, b, {A}^{(i)}, i=1,\ldots, m$.
        \STATE Initialize $(X^0, y^0, S^0)$.
        \FOR{$k=0,1,2,\ldots, K-1$}
        \STATE (1) (y-update) 
        Compute $u^{k+1}$ by~\eqref{eq:qadmm-u} classically and prepare a quantum representation of $u^{k+1}$.
        \STATE \quad \quad Solve the linear system $\tilde y^{k+1} = -\left(\mathcal{A A}^*\right)^{-1} u^{k+1}$ on the quantum computer.
         \STATE \quad \quad Construct classical $ y^{k+1}$ satisfying~\eqref{eq:qadmm-y} using vector state tomography.
        \STATE (2) (V-update)   
        \STATE Compute $V^{k+1}$ by~\eqref{eq:qadmm-tilde-V} via LCU on the quantum computer.

        \STATE (3) (S-update)
        \STATE Compute $\tilde S^{k+1}$ by~\eqref{eq:qadmm-tilde-V}-~\eqref{eq:qadmm-tilde-S} via QSVT on the quantum computer.
        \STATE \quad \quad Construct classical $ S^{k+1}$ satisfying~\eqref{eq:qadmm-S} using tomography.
        \STATE (4) (X-update)  Update $\tilde X^{k+1}$ by~\eqref{eq:qadmm-tilde-X} via QSVT on the quantum computer.
        \STATE \quad \quad Construct classical $ X^{k+1}$ satisfying~\eqref{eq:qadmm-X} using tomography.
        \STATE \quad \quad Recompute and store $\tilde X^{k+1}$ on the quantum computer.
        \ENDFOR 
        \STATE \textbf{Output:} $(X^K, y^K, S^K)$ or  $(X_{out}, y_{out}, S_{out}) := \left(\frac{1}{K} \sum_{k=1}^K X^{k}, \frac{1}{K} \sum_{k=1}^K y^{k}, \frac{1}{K} \sum_{k=1}^K S^{k}\right)$ .
    \end{algorithmic}
\end{algorithm}

\subsection{Polynomial proximal operator} \label{sec: polynomial barrier for SDP}

In this section, we detail the construction of the polynomial  $\boldsymbol{g}_1, \boldsymbol{g}_2$ introduced in the $S$-update and $X$-update. 
Without loss of generality, we focus on $\boldsymbol{g}_1$; the treatment  of $\boldsymbol{g}_2$ is entirely analogous.
The goal is to choose
$\boldsymbol{g}_1$ as an approximation of the projection operator $\mathrm{Proj}_{\mathbb{S}_+^n}(V)$, which preserve the convergence of the inexact ADMM framework. At the same time, $\boldsymbol{g}_1$ is a polynomial transformation, ensuring that it can be implemented efficiently on a quantum computer.
The following lemma shows the existence of such polynomials and their detailed proofs are given in Appendix~\ref{sec: polynomial barrier for SDP}.
\begin{lemma}
\label{lem: polynomial approximation}
1. For any $\epsilon > 0$, there exists a monotone increasing polynomial $g(x)$ on $[-1, 1]$ of degree $d = \mathcal{O}(\frac{1}{\epsilon})$ such that $g(-1)= 0$ and
\begin{equation}
    \label{eq: polynomial approximation}
     | \max(0, x) - g(x) | \leq \epsilon, \quad \forall x \in [-1, 1].
\end{equation}

2. Let $\boldsymbol{g}$ be the corresponding spectral operator of $g$.
Then $\boldsymbol{g}$ is a polynomial eigenvalue transformation on $\mathbb{S}^n$ and satisfies
\begin{equation}
    \label{eq: polynomial approximation for SDP}
     \| \mathrm{Proj}_{\mathbb{S}_+^n}(X) - \boldsymbol{g}(X) \|_2 \leq \epsilon, \quad \forall X \in 
     \{ X \in \mathbb{S}^n: \| X \|_2 \leq 1\}  .
\end{equation}
\end{lemma}
Equivalently, choose a constant $c_1\geq \|g_1\|_{\infty,[-1,1]}$ with $c_1=\Theta(1)$ and set $p_1=g_1/(2c_1)$. Since $|p_1(x)|\le 1/2$ on $[-1,1]$, the scaled spectral operator $\boldsymbol{p}_1$ can be implemented via QSVT in accordance with Proposition \ref{prop: quantum speedup for spectral operator}; multiplying the reconstructed output by $2c_1$ recovers $\boldsymbol{g}_1$.
Although a convex $\boldsymbol{h}$ satisfying $\mathrm{prox}_{\gamma \boldsymbol{h}}^{\mathcal{S}} = \boldsymbol{g}_1$ may not exist since $g$ is not nonexpansive, we can still choose $\boldsymbol{h}$ satisfying the variational equation $(\mathrm{Id} + \gamma \partial \boldsymbol{h})^{-1}(S) = \boldsymbol{g}_1(S)$.
This variational identity explains which first-order optimality equation the polynomial update approximately satisfies, while the actual convergence proof below compares the polynomial update directly with the PSD projection and treats the difference as an inexactness term.
Strict monotonicity is used only for this variational interpretation: if the polynomial approximation is merely monotone on $[-1,1]$, one may add an arbitrarily small strictly increasing perturbation and extend it smoothly to $\mathbb R$; the additional error is absorbed into $\delta_{\tilde S}$ and $\delta_{\tilde X}$.

\begin{lemma}
    \label{lem: proximal operator construction}
    Let $g: \mathbb{R} \to \mathbb{R}$ be a smooth function satisfying $g'(x)>0$ for all $x \in \mathbb{R}$. Denote $g^{-1}$ as the inverse function of $g$.
    Then 
    \begin{equation}
        \label{eq: proximal operator construction for scalar}
    h(x) = 
    \begin{cases}
        \frac{1}{\gamma} \int_{0}^{x} (g^{-1}(z) - z) dz , & \text{if } x \geq 0, \\
        \infty, & \text{if } x < 0,    
    \end{cases}
    \end{equation}
    satisfies $(\mathrm{Id}+\gamma\partial h)^{-1}(x)=g(x)$.
    
    \noindent
    2. 
    For $X \in \mathbb{S}^n$ with eigenvalue decomposition $X = U \mathrm{diag}(\lambda_1, \ldots, \lambda_n) U^\top$, where $U$ is a unitary matrix and $\lambda_i$ are the eigenvalues of $X$.
    Let $\boldsymbol{g}(X) = U \mathrm{diag}(g(\lambda_1), \ldots, g(\lambda_n)) U^\top$ be the eigenvalue transformation with respect to $g$.
    Define $\boldsymbol{h}: \mathbb{S}^n \to \mathbb{R}$ as
    $\boldsymbol{h}(X) = \sum_{i=1}^{n} h(\lambda_i)$, where $h$ is defined in~\eqref{eq: proximal operator construction for scalar}.
    Then $\boldsymbol{h}$ satisfies $(\mathrm{Id}+\gamma\partial \boldsymbol{h})^{-1}(S)=\boldsymbol{g}(S)$.
\end{lemma}

By Lemma~\ref{lem: polynomial approximation} and~\ref{lem: proximal operator construction}, a ``virtual'' barrier function $\boldsymbol{h}(S)$ is designed such that $\boldsymbol{g}_1 = (\mathrm{Id} + \gamma\partial \boldsymbol{h})^{-1}$ is a polynomial eigenvalue transformation. 
The possible nonconvexity of $\boldsymbol{h}$ therefore does not enter as a nonconvex ADMM objective. We do not rely on global minimization of a nonconvex proximal subproblem. Instead, the QSVT step implements a polynomial map $\boldsymbol{g}_1$ that is uniformly close to $\mathrm{Proj}_{\mathbb{S}_+^n}$ on the bounded spectral interval, and the convergence theorem uses the resulting errors $\Delta \tilde S^{k+1}$ and $\Delta \tilde X^{k+1}$ as controlled inexactness terms.
We denote their per-iteration bounds by $\delta_{\tilde S}$ and $\delta_{\tilde X}$, respectively.

\section{Complexity analysis}
\label{sec:complexity-analysis}

In this section, we analyze the complexity of QADMM for SDP. We first establish the convergence of QADMM for SDP and then derive the iteration complexity, and finally evaluate its overall computational cost. The detailed proofs of the results in this section are presented in Appendix~\ref{sec: complexity of QADMM for SDP}.

\subsection{Iteration number of QADMM for SDP}

\newcommand{\nonergodicconvergence}{
    (non-ergodic convergence)
The sequence $\{X^k, y^k, \tilde S^k, S^k\}$ generated by the scheme~\eqref{eq: y in ADMM for SDP}--\eqref{eq: X in ADMM for SDP} satisfies
\begin{subequations}
    \begin{align}
        - b^\top y^K  + b^\top y^* + \boldsymbol{h}(S^K) - \boldsymbol{h}(S^*)
        &\leq O\left(\sqrt{\frac{\gamma D}{K} + (K-1)\gamma e^{(1)} +2\gamma e^{(2)}}\right), \label{eq: non-ergodic convergence 1} \\
        \left\|\mathcal{A}^*(y^K) + S^K - C\right\|  &\leq \sqrt{\frac{\gamma D}{K} + (K-1)\gamma e^{(1)} +2\gamma e^{(2)}}. 
        \label{eq: non-ergodic convergence 2}
        \end{align}
\end{subequations}
}

In this subsection, we analyze the ergodic convergence of the inexact QADMM scheme. 
Without loss of generality, assume $\gamma = 1$.
We show that the averaged iterate
converges to an $\epsilon$-optimal solution of~\eqref{eq: dual SDP} in the sense of the objective value and the dual feasibility.

\begin{theorem}
    \textbf{(ergodic convergence)}
    \label{thm: ergodic convergence of QADMM for SDP}
    Assume $(X^*, y^*, S^*)$ satisfies the optimality conditions.
    The average iterate $(\bar{X}^K, \bar{y}^K, \bar{S}^K) := \frac{1}{K} \sum_{k=0}^{K-1} (\tilde X^k, \hat y^k, \tilde S^k)$  
     generated by the scheme~\eqref{eq:qadmm-hat-y}-\eqref{eq:qadmm-X} satisfies
    \begin{subequations}
    \begin{align}
    f\left(\bar{y}^K\right)-f\left(y^*\right) & \leq \mathcal{O}\left(\frac{\left\|X^0 - X^*\right\|_{\mathrm{F}}^2+\left\| S^0-S^*\right\|_{\mathrm{F}}^2}{K} +\delta \right)\label{eq: ergodic convergence of QADMM for SDP 1}, \\
    \left\|\mathcal{A}^*( \bar{y}^K ) + \bar{S}^K-C\right\|_{\mathrm{F}} & \leq \mathcal{O}\left(\frac{\left\|X^0-X^*\right\|_{\mathrm{F}}+\left\|S^0-S^*\right\|_{\mathrm{F}}}{K}+\delta\right) , \label{eq: ergodic convergence of QADMM for SDP 2} 
    \end{align} 
    \end{subequations}
     where $\delta = \Theta(\delta_{\tilde y} + \delta_y + \delta_V +\delta_X + \delta_S + \delta_{\tilde S} + \delta_{\tilde X})$ is the accumulated ADMM inexactness term. The QLS accuracy parameter $\delta_{\hat y}$ is chosen so that the induced iterate error satisfies $\delta_{\tilde y}=\Theta(\delta_{\hat y})$.
\end{theorem}
However, the quantum representations of variables are not accessible directly. For the classical representations reconstructed via tomography, we have the following results.
\begin{corollary}
\label{cor: ergodic convergence of QADMM for SDP 2}
Let the initial point be $(X^0, y^0, S^0) = (0, 0, 0)$. Then the output of Algorithm~\ref{alg:QADMM-for-SDP} satisfies
\begin{equation}
\begin{aligned}
f({y}_{out}) - f(y^*) &
\leq \mathcal{O}\left(\frac{R_X^2+R_S^2}{K} + \delta \right) ,
\\
 \left\|\mathcal{A}^*( y_{out} ) + S_{out}-C\right\|_{\mathrm{F}} & \leq
 \mathcal{O}\left(\frac{R_X+R_S}{K} + \delta  \right) .
\end{aligned} 
\end{equation}
\end{corollary}
Combining Theorem~\ref{thm: ergodic convergence of QADMM for SDP} and Corollary~\ref{cor: ergodic convergence of QADMM for SDP 2}
, we derive the iteration complexity as follows.
\begin{corollary}
    Assume $ \delta = \Theta(\epsilon_{\mathrm{abs}})$ and let the initial point be $(X^0, y^0, S^0) = (0, 0, 0)$.
    To achieve an $\epsilon_{\mathrm{abs}}$-accuracy solution $(y_{out}, S_{out})$ satisfying
    $
    - b^\top y_{out} + b^\top y^* \leq \epsilon_{\mathrm{abs}}
    $ and $\|A^*(y_{out}) + S_{out} - C\|_{\mathrm{F}} \leq \epsilon_{\mathrm{abs}}$,
    the algorithm~\ref{alg:QADMM-for-SDP} requires 
    $$
    K = \mathcal{O}\left( \frac{R_X^2 + R_S^2}{\epsilon_{\mathrm{abs}}} \right) 
    $$
    iterations, where $R_X$ and $R_S$ are the upper bounds of $\| X^*\|_{\mathrm{F}}$ and $\| S^* \|_{\mathrm{F}}$, respectively.
\end{corollary}

\subsection{Complexity of QADMM for SDP}

In this section, we formally analyze the worst case overall running times of QADMM for SDP. The per-iteration computational cost is written as
$$
T_{\mathrm{iter}} = T_{\mathrm{quant}} +T_{\mathrm{classic}},
$$
where $T_{\mathrm{quant}}$ is the quantum gate complexity and $T_{\mathrm{classic}}$ counts the classical arithmetic operations. 

On a classical computer, the linear operator $\mathcal{A}$ defined in~\eqref{eq: linear map A} can be represented as a matrix $\boldsymbol{A} \in \mathbb{R}^{m \times n(n+1)/2}$ and the matrix $X$ is compressed as a vector of length $n(n+1)/2$ for computing $\mathcal{A}(X)$.
Assume $\boldsymbol{A} \in \mathbb{R}^{m \times n(n+1)/2}$ is a sparse matrix with $s$ non-zero entries. 
The cost of classically computing $\mathcal{A}(X)$ is $T_{A} = \mathcal{O}(s_A + n^2)$ 
and computing $\mathcal{A}^*(y)$ is $T_{A^*} = \mathcal{O}(s_A + n^2)$. As a special case, if $\boldsymbol{A}$ is a dense matrix, then $T_{A} = T_{A^*} = \mathcal{O}(m n^2)$. 
The complexity of each step in Algorithm~\ref{alg:QADMM-for-SDP} is analyzed as follows:

\noindent
1.
		$y$-update. Assume the condition number of $\mathbf{A}\mathbf{A}^\top$ is $\kappa_A^2$.
	    By Proposition~\ref{prop: quantum linear solver} and~\eqref{eq: error bound Delta y}, QLS requires a runtime of $T_1 = \widetilde{\mathcal{O}}(\frac{\kappa_A^2(1+ \|\hat y\|_2)}{\delta_{\hat y}}) \leq \widetilde{\mathcal{O}}(\frac{\kappa_A^2(1+ R_y)}{\delta_{\hat y}})$ to achieve accuracy $\delta_{\hat y}$ and the tomography step requires $\mathcal{O}(m \frac{R_y}{\delta_y})$ queries to achieve accuracy $\delta_y$. 
	    Additionally, $\mathcal{O}(T_{A})$ classical operations are required for computing linear mappings. Preparing $|\hat u^{k+1}\rangle$ from the already computed vector $u^{k+1}$ requires $\mathcal{O}(m)$ classical writes into the state-preparation data structure; under the dense regime $m=\mathcal{O}(n^2)$ used in Table~\ref{tab:comparison-of-total-runtime-complexity}, this is dominated by the stated $\mathcal{O}(s_A+n^2)$ classical cost.

\noindent
2.	V-update. In this step,
LCU requires a runtime of $T_2 = \widetilde{\mathcal{O}}(e_V^{k+1}) \leq \widetilde{\mathcal{O}}(1 + R_X + R_y)$.

\noindent
3.	S-update.
By Proposition~\ref{prop: quantum speedup for spectral operator} and Lemma~\ref{lem: polynomial approximation}, applying QSVT to evaluate the polynomial transformation requires $\mathcal{O}(d)$ quantum gates, $\mathcal{O}(d)$ oracle queries, and $\mathcal{O}(1)$ ancillary qubits
to achieve that $\| \tilde{S}^{k+1} - \mathrm{Proj}_{\mathbb{S}_+^n}(\tilde V^{k+1})\|_2 \leq \epsilon$, where $d = \mathcal{O}(\frac{e_V^{k+1}}{\epsilon})
\leq \mathcal{O}(\frac{1 + R_X + R_y}{\epsilon})$ is the degree of the polynomial transformation.
We then construct a classical description of the iterate by performing state tomography on $\tilde S^{k+1}$.  Proposition~\ref{prop: tomography} implies that tomography requires $\mathcal{O}(n^2 \frac{R_S}{\delta_{S}})$ queries on the block-encoding of $\tilde S^{k+1}$ to achieve accuracy $\delta_S$. Hence the total runtime is $\widetilde{\mathcal{O}}( (T_1 + T_2 + d) \frac{n^2 R_S}{\delta_{S}})$.

\noindent
4.
$X$-update. Similarly to $S$-update, this step has a runtime of $O( (T_1 + T_2 +d) \frac{n^2 R_X}{\delta_{X}})$.

Summarizing the above analysis, one single QADMM iteration incurs
$
T_{\mathrm{classic}}= \mathcal{O}( s_A + n^2)
$
classical operations, and 
$$
T_{\mathrm{quant}} = \widetilde{\mathcal{O}} \left( \frac{\kappa_A^2(1+R_y) }{\delta_{\hat y}}m \frac{R_y}{\delta_y} + ( \frac{\kappa_A^2(1+R_y)}{\delta_{\hat y}}  + \frac{1 + R_X + R_y}{\epsilon}) (\frac{n^2 R_S}{\delta_{S}} + \frac{n^2 R_X}{\delta_{X}} ) \right)
$$
quantum gates and queries. 
Combining these per-iteration costs with the iteration bound established above yields the overall computational complexity of QADMM. This theorem no longer assumes a QRAM of size $\widetilde{\mathcal{O}}(2^{n^2/\epsilon_{\mathrm{abs}}})$; that term came from an overly literal restatement of a tomography primitive and is not needed for the algorithmic data access used here.
\begin{theorem}
    \label{thm: complexity of QADMM for SDP}
    With the accuracy choice ${\delta_{\hat y}, \delta_{\tilde y}, \delta_y, \delta_V, \delta_S, \delta_X, \delta_{\tilde S}, \delta_{\tilde X}} = \Theta(\epsilon_{\mathrm{abs}})$ and $d = \mathcal{O}(\frac{1+R_X+R_y}{\epsilon_{\mathrm{abs}}})$,
    a quantum implementation of Algorithm~\ref{alg:QADMM-for-SDP} under the polynomial-size QRAM data-access model described in Section~\ref{sec: qram and block encodings} and the block-encoding assumptions in Proposition~\ref{prop: block-encoding of matrices stored in QRAM} produces an $\epsilon_{\mathrm{abs}}$-accuracy solution using at most $\mathcal{O}\left( (s_A + n^2)  \frac{R_X^2 + R_S^2}{\epsilon_{\mathrm{abs}}} \right)$ classical operations and
    $$
    \widetilde{\mathcal{O}}\left(\left(m \kappa_A^2( 1 + R_y)^2 + n^2 ({\kappa_A^2}( 1 + R_y) + R_X)\right)\frac{(R_X+R_S)^3}{\epsilon_{\mathrm{abs}}^3}\right)
    $$
    quantum gates and queries.
\end{theorem}

\section{Numerical simulation}
\label{sec:numerical-simulation}
This section includes a small deterministic numerical experiment to illustrate the behavior of the full QADMM iteration with an operator-level simulation of the QSVT spectral update. Its purpose is to check whether the inexact QSVT-polynomial update preserves the convergence behavior of ADMM on a concrete SDP instance. 

Consider the Max-Cut SDP relaxation~\cite{goemans1995improved} on a weighted undirected graph,
\begin{equation}
    \min_{X\in\mathbb{S}^n}\ \langle C,X\rangle
    \quad \mathrm{s.t.}\quad
    \mathrm{diag}(X)=\mathbf{1},\quad X\succeq 0 ,
\end{equation}
where $C=-L/4$ and $L$ is the graph Laplacian. The graph has $n=8$ vertices. We first place a unit-weight cycle on the vertices and then add eight additional non-cycle edges. The extra edges are generated deterministically using the NumPy random generator; their weights are sampled independently from the uniform distribution on $[0.1,0.8]$. Thus the data matrix $C$ and the right-hand side $b=\mathbf{1}$ are fixed across all runs.

In the numerical experiment, Algorithm~\ref{alg:QADMM-for-SDP} is implemented in Python, with the $S$-update implemented by the operator-level QSVT simulation. At iteration $k$, we form
$V^{k+1}=C-\mathcal{A}^*(y^{k+1})-\gamma X^k$ with $\gamma=0.5$ and normalize it by
$
B_k=\|C\|_{\mathrm{F}}+\|y^{k+1}\|_1+\gamma\|X^k\|_{\mathrm{F}} .
$
This produces the Hermitian contraction $W_k=V^{k+1}/B_k$. The script constructs the exact one-ancilla block-encoding
$
U_{W_k}=\begin{pmatrix}
W_k & \sqrt{I-W_k^2}\\
\sqrt{I-W_k^2} & -W_k
\end{pmatrix},
$
and then applies the QSVT-compatible polynomial $P_d=b_d/2$, where $b_d$ is the Bernstein polynomial approximation of $x\mapsto\max\{x,0\}$ on $[-1,1]$. In the notation of Algorithm~\ref{alg:QADMM-for-SDP}, this operator-level simulation has $\tilde V^{k+1}=V^{k+1}$, $e_V^{k+1}=B_k$, and $v_V^{k+1}=W_k$. Hence the QSVT spectral transformation produces the intermediate update
$\tilde S^{k+1}=2B_kP_d(W_k)$,
which corresponds to~\eqref{eq:qadmm-tilde-S}. The classical iterate used in the residual and in subsequent updates is then
$S^{k+1}=\mathrm{Proj}_{\mathcal{S}}(\tilde S^{k+1})$,
where in this experiment the projection enforces the prescribed Frobenius-radius bound after the polynomial spectral update.
For reference, we also run the same ADMM iteration with the exact spectral projection $S^{k+1}=\mathrm{Proj}_{\mathbb{S}_+^n}(V^{k+1})$, which serves as the standard-projection baseline.
We test the fixed polynomial degrees $d\in\{512,2048,8192\}$. After the spectral update, both $S^{k+1}$ and $X^{k+1}$ are clipped to the Frobenius ball of radius $2n$, matching the bounded-iterate framework used in the convergence analysis. All methods start from $X^0=0$ and $S^0=0$ and run for $700$ iterations. 
Figure~\ref{fig:qsvt-qadmm-simulation} reports the dual constraint residual and objective gap during the iterations. The reference optimum $p^\star$ is computed by the CLARABEL~\cite{goulart2026clarabel} solver in CVXPY~\cite{diamond2016cvxpy}.

\begin{figure}[t]
    \centering
    \includegraphics[width=0.98\textwidth]{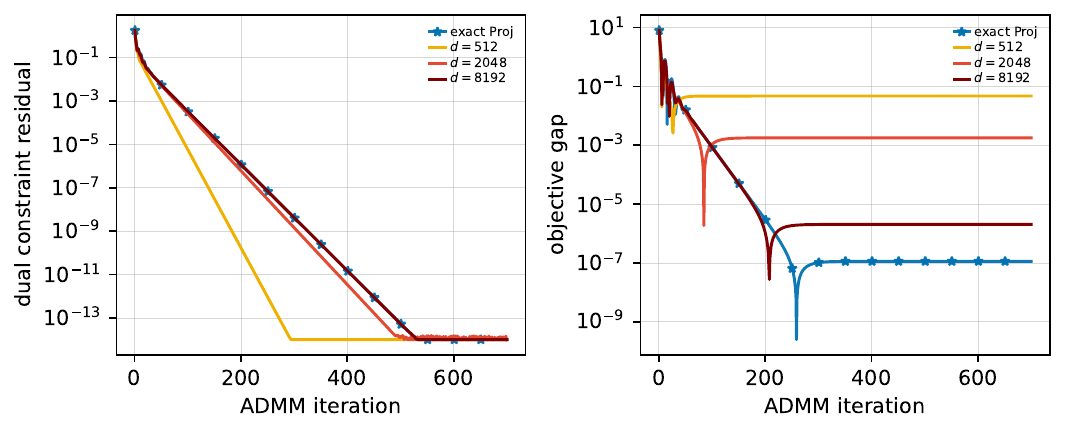}
\caption{Numerical simulation of QADMM. 
\texttt{exact Proj} denotes ADMM with the exact spectral projection $\mathrm{Proj}_{\mathbb{S}^n_+}$, and the other three curves denote QADMM runs with different QSVT polynomial degrees $d$. The left panel shows the dual constraint residual $\|\mathcal{A}^*(y^k)+S^k-C\|_{\mathrm{F}}$, and the right panel shows the objective gap $|b^\top y^k-p^\star|$ to the reference SDP optimum.}
    \label{fig:qsvt-qadmm-simulation}
\end{figure}

The simulation shows that the QSVT-polynomial updates track the convergence behavior of the standard spectral-projection ADMM on this instance. Increasing the fixed polynomial degree reduces the inexactness of the spectral update and gives objective values closer to the optimum; with $d=8192$, the final objective gap is approximately $2.0\times 10^{-6}$. A sharp dip followed by a rebound in the objective-gap curve occurs when the signed objective error $b^\top y^k-p^\star$ crosses zero early.

\section{Conclusion}

\label{sec: conclusion}
We present a QADMM algorithm for SDP based on an inexact ADMM framework, which leverages quantum spectral transformation to replace the most computationally intensive classical matrix eigendecomposition steps. By carefully designing a polynomial proximal operator, the iterative scheme admits an efficient implementation via QSVT. 
A detailed complexity analysis shows that QADMM improves upon both classical ADMM and quantum solvers in regimes of  dimension $n$ or accuracy $\epsilon$. The main limitation of our algorithm is that it requires polynomial-size QRAM data structures for encoding the data matrices and intermediate classical iterates, which is a strong assumption. Future work will focus on relaxing this assumption and exploring the potential of QADMM for other convex optimization problems such as convex conic programming and convex quadratic programming.

\section*{Author contributions}
Hantao Nie proposed the idea, wrote the manuscript, and carried out the numerical experiments. Dong An proposed the idea and contributed to the manuscript writing. Zaiwen Wen proposed the idea. All authors discussed the results and approved the final manuscript.
Large language models were used for language polishing and coding assistance. All authors reviewed the outputs and take full responsibility for the content of the manuscript.

\begin{acknowledgements}
Dong An acknowledges funding from Quantum Science and Technology - National Science and Technology Major Project via Project 2024ZD0301900, and the support by The Fundamental Research Funds for the Central Universities, Peking University. 
We thank Tamas Terlaky for the helpful discussion on the recent developments of quantum SDP algorithms.

\end{acknowledgements}

\appendix
\section{Literature Review}\label{app:sec: literature review}

\textbf{Classical Methods for SDP}

\noindent
The development of efficient classical algorithms for SDP has been a major focus in optimization research.  
Cutting plane methods are a class of algorithms that iteratively refine feasible regions by adding linear constraints (cuts) to the problem. These methods are particularly effective for large-scale SDPs, as they can exploit the sparsity of the underlying data. In~\cite{lee2015faster}, the authors proposed a cutting plane method for solving SDPs with a complexity of 
$$
{O}\left( m (m^2 + n^{\omega} + s_A)\right),
$$
where $\omega < 2.376$ is the exponent of matrix multiplication and $s_A$ is the number of non-zero entries in all the constraint matrices.

The matrix multiplicative weights update method (MMWU) is a powerful technique for solving game theory problems. MMWU is a primal-dual algorithm that iteratively updates the primal and dual variables based on the observed gradients of the objective function. It is shown that MWU is equivalent to the mirror descent method with 
quantum relative entropy as the Bregman divergence (Proposition 8.5 in~\cite{nannicini2024quantum}). 
When applied to SDPs, MMWU has a complexity of 
$$
    \mathcal{O}\left(m n^2 \cdot \operatorname{poly}\left(\frac{R_{\mathrm{Tr}X} R_{y}}{\epsilon_{\mathrm{abs}}}\right)\right) ,
$$
where $R_{\mathrm{Tr}X}$ is an upper bound on the trace of the primal solutions, $R_y$ is an upper bound on the $l_1$ norm of the dual solutions, and $\epsilon_{\mathrm{abs}}$ is the absolute accuracy of the objective function value.

Interior point methods for SDPs were pioneered by Karmarkar in the 1980s.
By solving a sequence of Newton systems derived from barrier function optimality conditions, IPMs
provide polynomial-time solvability for SDPs and are highly effective for small to medium problem sizes. 
In~\cite{jiang2020faster}, the authors proposed an interior point method (IPM) for solving semidefinite programs (SDPs) that requires
${O}(\sqrt{n} \log(1 / \epsilon))$ iterations,
with a per-iteration cost of $\mathcal{O}(m n^2 + m^\omega + n^\omega)$.
Consequently, the overall complexity is
$$
\widetilde{\mathcal{O}}\left(\sqrt{n}\left(m n^2+m^\omega+n^\omega\right) \right),
$$
where $\epsilon_{\mathrm{gap}}$ is the relative accuracy measured by the relative duality gap. In the literature of IPMs, an $\epsilon_{\mathrm{gap}}$-optimal solution measured by relative accuracy is defined as a solution that satisfies 
$$
\langle C, X\rangle \geq \langle C, X^* \rangle - \epsilon_{\mathrm{gap}} \| C \|
\text{ and }
\|\mathcal{A}(X) - b\|_1 \leq \epsilon_{\mathrm{gap}} (\|b\|_1 + \sum_{i=1}^{m}\|A_i\|) .
$$
Many prominent SDP solvers are based on IPMs and can reliably solve moderate-sized SDPs. However, interior point approaches become computationally expensive for large-scale SDPs, since each iteration requires factorizing large Hessian or Schur complement matrices. This per-iteration cost grows steeply (often cubic in the matrix dimension), which limits the scalability of IPMs on very high-dimensional problems.

To handle large problem instances where IPMs struggle, researchers have turned to operator splitting-based methods that use only gradient or operator splitting steps instead of second-order Hessian computations. These approaches include alternating direction augmented Lagrangian methods (SDPAD)~\cite{wen2010alternating}, ADMM-based interior point methods (ABIP)~\cite{deng2024enhanced}, operator splitting with homogeneous self-dual embedding (SCS)~\cite{o2016conic}. 
In exchange, they typically attain solutions of modest accuracy and may require many more iterations to converge. Operator splitting-based methods perform particularly well in numerical experiments, but few of them have been rigorously analyzed in terms of complexity.
ABIP~\cite{deng2024enhanced} is proved to converge in a number of iterations as
$
    \mathcal{O}\left(\frac{\kappa_A^2\|\mathbf{Q}\|^2}{\sqrt{\epsilon_{\mathrm{abs}}}} \log \left(\frac{1}{\epsilon_{\mathrm{abs}}}\right)\right),
$
where $\kappa_A$ is the condition number of the constraint matrix, $\mathbf{Q}$ is the coefficient matrix of the HSD formulation with $\|\mathbf{Q}\|^2 = \mathcal{O}(\sum_{i=1}^m \|A_i\|^2 + \|b\|^2 + \|C\|^2)$. 
Each iteration costs $\mathcal{O}(n^4)$ operations for matrix multiplication and requires one eigenvalue decomposition of size $n$.
If we assume that the eigenvalue decomposition has the same complexity as matrix multiplication, then the overall complexity of ABIP is 
$$
\widetilde{\mathcal{O}}\left(n^4\frac{\kappa_A^2\|\mathbf{Q}\|^2}{\sqrt{\epsilon_{\mathrm{abs}}}}\right) .
$$
We believe that the complexity of first order methods can be improved by using quantum computers.

\textbf{Quantum Algorithms for SDP}

Given the computational challenges of solving very large SDPs, a growing body of work explores quantum algorithms that could potentially offer polynomial speed-ups over classical methods. Quantum computers can, in principle, accelerate linear algebra subroutines (like solving linear systems or eigenvalue estimation) that are central to SDP solvers. The goal of quantum SDP solvers is typically to improve the dependence on the problem dimension ($n$) or number of constraints ($m$) by leveraging quantum linear algebra. 

Initial breakthroughs in quantum SDP solving were achieved by Brandão and Svore~\cite{brandao2017quantum}. 
The quantum matrix multiplicative weights update (QMWU) method they proposed is built on a combination of quantum Gibbs sampling and the multiplicative weight updates framework. QMWU draws inspiration from a classical heuristic of MWU for SDP, but replaces the costly inner linear programming steps with quantum subroutines. This quantum algorithm prepares quantum states that encode an approximate solution and iteratively updates them using a form of matrix exponential weights. 
Their method achieves a worst-case running time of 
$$
{\mathcal{O}}\left(\sqrt{mn} s^2 \mathrm{poly}(\log(n), \log(m), R_{\mathrm{Tr}X}, R_y, \frac{1}{\epsilon_{\mathrm{abs}}})\right)
$$
with $s$ the row-sparsity of the input matrices. 
Specifically, their algorithm provides a quadratic improvement over the best known classical runtime dependence on $n$ and $m$. 

In addition to the algorithm itself, Brandão and Svore proved that its speed-up was essentially optimal in broad generality: they established a quantum lower bound of $\Omega(\sqrt{n} + \sqrt{m})$ for solving SDPs when other parameters are held constant. This result indicates that no quantum algorithm can asymptotically outperform the $\mathcal{O}(\sqrt{nm})$ time scaling in full generality, barring improvements in logarithmic or problem-dependent factors.

Following this pioneering work, van Apeldoorn et al.~\cite{van2017quantum} developed improved quantum SDP solvers that refined the complexity dependence on various input parameters. Van Apeldoorn et al. introduced new quantum algorithmic techniques including a method to efficiently implement smooth functions of a sparse Hamiltonian and a generalized minimum-finding procedure, which together yielded better runtime scaling with respect to these parameters. Their improved quantum SDP solver essentially matches the favorable $\mathcal{O}(\sqrt{nm})$ scaling in the main problem sizes while bringing down the overhead polynomial factors in $s$, $1/\delta$. The worst case complexity of their algorithm is
$$
\widetilde{\mathcal{O}}\left(\sqrt{n m} s^2\left(\frac{R_{\mathrm{Tr}X} R_y}{\epsilon_{\mathrm{abs}}}\right)^8\right) .
$$
In the same work, they showed that for certain SDPs arising from combinatorial optimization with high symmetry, the quantum speed-up may disappear. More fundamentally, the authors proved a worst-case lower bound: for general linear programs (including SDPs), any quantum algorithm must take time linear in $m\times n$ when $m$ is on the order of $n$. In other words, in the dense, worst-case regime, a quantum solver cannot asymptotically outperform classical solvers. This result reinforced the understanding that quantum speed-ups for SDPs are achievable in certain regimes.

Further advances in quantum SDP algorithms were made by van Apeldoorn and Gilyén~\cite{van2018improvements}, who provided a unified framework that improved and generalized all prior quantum SDP solvers. They focused on constructing more efficient quantum Gibbs samplers for different models of input access, leading to better algorithmic bounds. For the standard sparse-matrix input model (where the algorithm can query entries of the constraint matrices), their quantum SDP solver runs in time 
$$
\widetilde O\left( (\sqrt{m} + \sqrt{n} \frac{R_{\mathrm{Tr}X} R_y}{\epsilon_{\mathrm{abs}}}) s \left(\frac{R_{\mathrm{Tr}X} R_y}{\epsilon_{\mathrm{abs}}}\right)^4 \right).
$$ 
Notably, they applied their improved algorithm to Aaronson's shadow tomography problem (estimating properties of an unknown quantum state) and obtained a more efficient solution than was previously known. They also showed quantum speed-ups for tasks like quantum state discrimination and $E$-optimal experimental design by formulating them as SDPs. 
In each case, the quantum SDP approach beat the best known classical complexity in certain parameters, albeit sometimes at the expense of worse dependence on other parameters. Finally, van Apeldoorn and Gilyén strengthened the theoretical foundations by proving new lower bounds for quantum SDP-solving. 
They showed that any quantum algorithm must incur at least a $\widetilde\Omega(\sqrt{m})$ factor (up to logarithmic terms) in its runtime complexity for SDPs, and that polynomial dependence on certain problem parameters (like condition numbers or error) is necessary in the quantum setting. 
These results confirmed that the scaling achieved by their algorithms is essentially tight and unavoidable without further assumptions. 

Recently, researchers have revisited the interior point method paradigm in the quantum context. Augustino et al.~\cite{augustino2023quantum} proposed the first quantum interior point methods (QIPMs) for SDP, aiming to combine the convergence guarantees of IPMs with quantum linear algebra speed-ups. Their work introduced two variants of a quantum interior point algorithm, both leveraging quantum linear system solvers to handle the Newton step at each iteration. 
The first variant II-QIPM closely mirrors a classical IPM but allows an inexact search direction solved by a quantum linear solver, which can lead to iterates that temporarily violate feasibility. This method achieves a runtime of
$$
\widetilde{\mathcal{O}}\left(\left(n^{5.5} \kappa_{newt} \kappa_{\mathcal{A}} \rho\left(\|\mathcal{A}\|_{\mathrm{F}}+\rho n^{1.5}\right)\right) \frac{1}{\epsilon_{\mathrm{gap}}}\right),
$$
where $\kappa_{\mathcal{A}}$ is the condition number of the matrix $\mathcal{A}$ whose columns are the vectorized constraint matrices $A^{(1)}, \ldots, A^{(m)}$, $\kappa_{newt}$ is the condition number of the Newton system, and $\rho>0$ denotes the size of the initial infeasible solution and is generally considered a constant in many papers. 

The second variant IP-QIPM uses a novel nullspace-based Newton system formulation to ensure that each step stays within the feasible region, even though the Newton equations are solved only approximately. 
This method solves the primal-dual SDO pair with $m=\mathcal{O}\left(n^2\right)$ constraints to $\epsilon_{\mathrm{gap}}$-optimality using at most
$$
\tilde{O}\left(n^{3.5} \frac{\kappa_{newt}^2}{\epsilon_{\mathrm{gap}}} \right) 
$$
QRAM accesses and $\mathcal{O}\left(n^{4.5}\right. \mathrm{poly}\log \left.\left(n, \kappa_{newt}, \frac{1}{\epsilon_{\mathrm{gap}}}\right)\right)$ arithmetic operations. 
Compared to classical IPMs, their algorithms run faster in the large-$n$ regime, but the cost grows more steeply with $\frac{1}{\epsilon_{\mathrm{gap}}}$ (accuracy requirements) and with certain spectral condition measures than in classical IPMs. This reflects a common theme in quantum optimization algorithms: one can often trade off some dependence on precision or problem “niceness” for gains in raw dimension or size scaling. 

In addition to the baseline QIPM results, Mohammadisiahroudi et al.~\cite{mohammadisiahroudi2025quantum} develop an iterative-refinement (IR) framework for semidefinite optimization that repeatedly calls any limited-precision interior-point-type oracle at a fixed accuracy and provably upgrades the solution to a target precision in refinement steps, thereby turning the overall dependence on accuracy from polynomial to logarithmic while preserving the best known dimension and conditioning scalings of the underlying oracle. 
Concretely, with the costs of 
$$
\widetilde{\mathcal{O}}\left(n^{3.5}\kappa_0^{2}\right)
$$
QRAM accesses and $\widetilde{\mathcal{O}}\left(n^{4.5}\right)$ arithmetic operations, the wrapped IR-QIPM inherits these per-call bounds and achieves quadratic convergence of the duality gap without relying on strict complementarity or nondegeneracy assumptions. The authors note a practical caveat that the cost per refinement step can increase with conditioning, but the worst-case accuracy dependence is exponentially improved relative to directly running a high-precision QIPM.

\section{Review of ADMM} \label{app:sec: review of ADMM}

\subsection{ADMM for convex optimization}

In general, consider a convex optimization problem of the form:
$$
\begin{aligned}
\min_{u, v} &\quad  f(u) + h(v)\quad
\text{s.t. } & Au + Bv = c~,
\end{aligned}
$$
where $f(u)$ and $h(v)$ are convex functions and $A u + B v = c$ is an equality constraint coupling the decision variables $u$ and $v$. The augmented Lagrangian function for this problem introduces a dual variable $\Lambda$ for the constraint and a penalty parameter $\gamma>0$ to enforce the constraint softly:
$$
\mathcal{L}_\gamma(u,v,\Lambda) = f(u) + h(v) + \langle \Lambda, Au + Bv - c \rangle + \frac{1}{2\gamma}\|A u + B v - c\|^2.
$$
The augmented Lagrangian function is a penalized version of the Lagrangian function with an additional quadratic term that penalizes the violation of the constraints. 
Compared to the Lagrangian function, 
the augmented Lagrangian function serves as a tighter lower bound on the objective function. By coupling the primal and dual variables in the augmented Lagrangian function, we transform the original constrained optimization problem into a saddle point problem 
$$
\min_{u, v } \max_{\Lambda} \mathcal{L}_\gamma(u,v,\Lambda) .
$$
An ADMM iteration consists of alternating optimization of this augmented Lagrangian function with respect to $u$ and $v$, updating the dual variable by one gradient step. The classical ADMM algorithm for the above problem is summarized as follows:

1. $u$-update: $u^{k+1} := \displaystyle\arg\min_{u} \mathcal{L}_\gamma(u, v^k, \Lambda^k)$. \\
2. $v$-update: $v^{k+1} := \displaystyle\arg\min_{v} \mathcal{L}_\gamma(u^{k+1},v , \Lambda^k)$. \\
3. Dual update: $\Lambda^{k+1} := \Lambda^k + \frac{1}{\gamma} (A u^{k+1} + B v^{k+1} - c)$.

\subsection{Classical ADMM for SDP} \label{app:sec: classical ADMM for SDP}

Assume $(X^*, y^*, S^*)$ satisfies the optimality conditions:
\begin{align}
    \mathcal{A}(X^*) &= b,\quad X^* \succeq 0,      \tag{Primal feasibility}\\
    \mathcal{A}^*(y^*) + S^* &= C,\quad S^* \succeq 0, \tag{Dual feasibility}\\
    X^* S^* &= 0.                                  \tag{Complementary slackness}
\end{align}
Starting from an initial guess $(X_0,y_0,S_0)$,
classical ADMM iteratively updates the primal and dual variables as follows:
\begin{subequations}
    \begin{align}
         y^{k+1} & :=-\left(\mathcal{A A}^*\right)^{-1}(\gamma(\mathcal{A}(X^k)-b)+\mathcal{A}(S^k-C)), \label{app:eq: y in classical ADMM for SDP} \\
         S^{k+1} & :=\mathrm{Proj}_{\mathbb{S}^n}\left(C-\mathcal{A}^*(y^{k+1})-\gamma X^k \right), \label{app:eq: S in classical ADMM for SDP}\\
         X^{k+1} & := X^k+\frac{1}{\gamma}(\mathcal{A}^*\left(y^{k+1}\right)+S^{k+1}-C)
         .
         \label{app:eq: X in classical ADMM for SDP} 
    \end{align}
\end{subequations}
We denote $f_1(X) = \langle C, X\rangle + \delta_{\{\mathcal{A}(X)=b\}}(X)$ and compute its proximal operator as
\begin{equation}
    \label{app:eq: proximal operator of f_1}
    \mathrm{prox}_{f_1 / \gamma}(Z) = Z - \frac{C}{\gamma} - \mathcal{A}^{*}(\mathcal{A}\mathcal{A}^*)^{-1}(\mathcal{A}(Z - \frac{C}{\gamma}) - b).
\end{equation}
By introducing a variable 
$$
Z^{k+1} = X^{k}+\frac{1}{\gamma}(\mathcal{A}^*\left(y^{k+1}\right)-C),$$
the iterate scheme~\eqref{app:eq: y in classical ADMM for SDP}--\eqref{app:eq: X in classical ADMM for SDP} is rewritten as a fixed point iteration
$
Z^{k+1} = \mathcal{T}(Z^k) 
$,
where $\mathcal{T}$ is a nonexpansive operator defined as
\begin{equation}
    \label{app:eq: fixed point mapping}
\mathcal{T}(Z^k) := Z^k + \mathrm{prox}_{f_1 / \gamma}(2 \mathrm{Proj}_{\mathbb{S}_+^n}(Z^k) - Z^k) - \mathrm{Proj}_{\mathbb{S}_+^n}(Z^k).
\end{equation}
This equivalent fixed point iteration is called Douglas-Rachford splitting (DRS)~\cite{douglas1956numerical}.
Moreover, the sequence $\{(X^k, y^k, S^k)\}$ is recovered from $Z^k$ by
\begin{equation}
\begin{aligned}
X^{k} &= \mathrm{Proj}_{\mathbb{S}^n}(Z^k), \\
S^{k} &= \mathrm{Proj}_{\mathbb{S}^n}( - \gamma Z^k) , \\
y^{k+1} &= -\left(\mathcal{A A}^*\right)^{-1}(\gamma(\mathcal{A}(X^k)-b)+\mathcal{A}(S^k-C)).
\end{aligned}
\end{equation}

\subsection{Complexity of classical ADMM for SDP} \label{app:sec: complexity of classical ADMM for SDP}
The following proposition shows the convergence of the ADMM algorithm for SDP.
This is a direct application of the classical ADMM convergence theory (Theorem 3.2, 3.3 in~\cite{lin2022alternating}). 

\begin{proposition}
    \label{app:thm: convergence of exact ADMM}
Assume $(X^*, y^*, S^*)$ satisfies the optimality conditions.
Let $D=\gamma \|X^0-X^*\|^2+\frac{1}{\gamma}\|S^0-S^*\|^2$.

\noindent
1. (asymptotic convergence) The sequence $\{X^k, y^k, S^k\}$ generated by the scheme~\eqref{app:eq: y in classical ADMM for SDP}--\eqref{app:eq: X in classical ADMM for SDP}
 satisfies
$$
- b^\top y^{k} + b^\top y^* 
\to 0, \quad \mathcal{A}^*(y^{k}) + S^{k} - C \to 0, \quad \text{as} \quad k \to \infty.
$$

\noindent
2. 
(non-ergodic convergence) The last iterate $\{X^K, y^K, S^K\}$ generated by the scheme~\eqref{app:eq: y in classical ADMM for SDP}--\eqref{app:eq: X in classical ADMM for SDP}
 satisfies
$$
\begin{aligned}
 - b^\top y^K  + b^\top y^*
&\leq \frac{D}{K}+\frac{2 D}{\sqrt{K}}+\left\|X^*\right\| \sqrt{\frac{\gamma D}{ K}}, \quad 
\left\|\mathcal{A}^*(y^K) + S^K - C\right\|  \leq \sqrt{\frac{\gamma D}{K}}.
\end{aligned}
$$

\noindent
3. (ergodic convergence) The average iterate $\bar X^K = \frac{1}{K} \sum_{k=1}^K X^k$, $\bar y^K = \frac{1}{K} \sum_{k=1}^K y^k$, and $\bar S^K = \frac{1}{K} \sum_{k=1}^K S^k$  generated by the scheme~\eqref{app:eq: y in classical ADMM for SDP}--\eqref{app:eq: X in classical ADMM for SDP} satisfies
$$
\begin{aligned}
    -b^\top \bar{y}^K + b^\top y^* 
    &
    \leq \frac{D}{2K}+\frac{2 \sqrt{\gamma D}\left\|X^*\right\|}{K}, \quad 
    \left\|\mathcal{A}^*(\bar{y}^K) + \bar{S}^K - C\right\| \leq \frac{2 \sqrt{\gamma D}}{K} .
\end{aligned}
$$
\end{proposition}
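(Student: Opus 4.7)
The plan is to reduce the entire analysis to the Douglas--Rachford splitting fixed-point iteration $Z^{k+1}=\mathcal{T}(Z^k)$ constructed in Appendix~\ref{app:sec: classical ADMM for SDP}, and then invoke the standard convergence theory for averaged nonexpansive operators. As a preliminary step I would verify that the operator $\mathcal{T}$ defined in~\eqref{app:eq: fixed point mapping} is firmly nonexpansive (equivalently, $1/2$-averaged), which follows because $\mathrm{prox}_{f_1/\gamma}$ and $\mathrm{Proj}_{\mathbb{S}_+^n}$ are both firmly nonexpansive, and $\mathcal{T}$ is the canonical DRS combination of these two operators. The set $\mathrm{Fix}(\mathcal{T})$ is in bijection with KKT triples $(X^*, y^*, S^*)$ via the recovery formulas stated in Appendix~\ref{app:sec: classical ADMM for SDP}, and a direct algebraic check identifies the quantity $D=\gamma\|X^0-X^*\|^2+\frac{1}{\gamma}\|S^0-S^*\|^2$ with (up to a known $\gamma$ factor) $\|Z^0 - Z^*\|^2$, which is the natural Lyapunov quantity in every bound that follows.

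For part~1 (asymptotic convergence), I would use Fej\'er monotonicity of $\{Z^k\}$ with respect to $\mathrm{Fix}(\mathcal{T})$, a standard consequence of the averaged property. This yields $\|Z^{k+1} - Z^k\| \to 0$, and the identity $\mathcal{A}^*(y^{k+1}) + S^{k+1} - C = \gamma(X^{k+1} - X^k)$ derived directly from~\eqref{app:eq: X in classical ADMM for SDP} together with the recovery formula $X^k = \mathrm{Proj}_{\mathbb{S}_+^n}(Z^k)$ immediately gives that the feasibility residual vanishes. Convergence of the objective $-b^\top y^k \to -b^\top y^*$ then follows by pairing the vanishing residual with $X^*$ and invoking the KKT variational inequality together with weak duality.

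For part~2 (non-ergodic convergence), the core technical tool is the last-iterate bound $\|Z^{k+1}-Z^k\|^2 \le \|Z^0-Z^*\|^2 / (k+1)$ for averaged operators (see Theorem~3.2 in~\cite{lin2022alternating}), which through the residual identity produces the $O(1/\sqrt{K})$ rate on $\|\mathcal{A}^*(y^K)+S^K - C\|$. To turn this into the objective bound, I would pair the residual with $X^*$ and use complementary slackness $\langle X^*, S^*\rangle=0$ to obtain $-b^\top y^K + b^\top y^* \le \langle X^*, \mathcal{A}^*(y^K)+S^K-C\rangle + \text{telescoping terms}$; the Cauchy--Schwarz inequality then produces the $\|X^*\|\sqrt{\gamma D/K}$ contribution, while the telescoping terms contribute $\frac{D}{K}+\frac{2D}{\sqrt{K}}$ once the averaged-operator inequalities are summed.

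For part~3 (ergodic convergence), I would write the variational inequalities induced by the subproblem optimality conditions for $y^{k+1}$ and $S^{k+1}$ in the augmented Lagrangian~\eqref{eq: augmented Lagrangian for SDP}, sum them from $k=0$ to $K-1$, and telescope the quadratic terms via the identity $\|Z^{k+1} - Z^*\|^2 - \|Z^k - Z^*\|^2 = -\|Z^{k+1} - Z^k\|^2 + 2\langle Z^{k+1} - Z^*, Z^{k+1} - Z^k \rangle$. Dividing by $K$ and invoking convexity of $f$ and of $\delta_{\mathbb{S}_+^n}$ replaces the averaged subproblem values by the objective evaluated at $(\bar X^K, \bar y^K, \bar S^K)$, yielding the $D/(2K)$ rate; the feasibility bound follows by averaging the residual identity and applying Jensen's inequality. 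The main obstacle across all three parts is the careful bookkeeping of the $\gamma$ scaling under the change of variables $Z \leftrightarrow (X, y, S)$ and verifying that the constants match the precise forms $\frac{D}{K}+\frac{2D}{\sqrt{K}}+\|X^*\|\sqrt{\gamma D/K}$ and $\frac{D}{2K}+\frac{2\sqrt{\gamma D}\|X^*\|}{K}$ stated in the proposition; the substantive analytic content beyond this bookkeeping is already contained in Theorems~3.2 and~3.3 of~\cite{lin2022alternating}, so the role of the proof is chiefly to check that the SDP ADMM instance fits those hypotheses via the DRS reformulation of Appendix~\ref{app:sec: classical ADMM for SDP}.
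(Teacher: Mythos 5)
Your overall strategy coincides with the paper's: the paper gives no self-contained proof of this proposition, but presents the DRS reformulation~\eqref{app:eq: fixed point mapping} and then invokes Theorems~3.2--3.3 of~\cite{lin2022alternating} as a ``direct application,'' which is exactly the reduction you describe. The problem lies in the one bridging step you add on top of that citation. Your claim that ``a direct algebraic check identifies $D$ with (up to a known $\gamma$ factor) $\|Z^0-Z^*\|^2$'' is false in general, and the discrepancy points in the unfavorable direction. By the Moreau decomposition the DRS variable satisfies $Z^k = X^k - \frac{1}{\gamma}S^k$, so for a complementary initialization ($\langle X^0,S^0\rangle=0$, which is in any case required for the recovery formulas $X^0=\mathrm{Proj}_{\mathbb{S}_+^n}(Z^0)$, $S^0=\mathrm{Proj}_{\mathbb{S}_+^n}(-\gamma Z^0)$ to hold at $k=0$) one finds
\begin{equation*}
\gamma\left\|Z^0-Z^*\right\|^2 \;=\; D \;+\; 2\left(\langle X^0,S^*\rangle+\langle X^*,S^0\rangle\right),
\end{equation*}
using $\langle X^*,S^*\rangle=0$. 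The cross terms are inner products of positive semidefinite matrices, hence nonnegative and in general strictly positive, so $\gamma\|Z^0-Z^*\|^2\ge D$. Every estimate you extract from averaged-operator theory (Fej\'er monotonicity, the last-iterate bound $\|Z^{K}-Z^{K-1}\|^2\le\|Z^0-Z^*\|^2/K$, the ergodic telescoping) carries the constant $\|Z^0-Z^*\|^2$, which cannot be bounded above by $D/\gamma$; your argument therefore proves the proposition only with $D$ replaced by the strictly larger quantity $D+2(\langle X^0,S^*\rangle+\langle X^*,S^0\rangle)$, not the bounds as stated.

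The fix is to run the contraction argument not in the Euclidean norm on the $Z$-variable but in the ADMM coordinates $w^k=(S^k,X^k)$ equipped with the weighted norm $\|w\|_H^2=\frac{1}{\gamma}\|S\|^2+\gamma\|X\|^2$: the classical inequality $\|w^{k+1}-w^*\|_H^2\le\|w^k-w^*\|_H^2-\|w^{k+1}-w^k\|_H^2$ is what the cited ADMM theorems actually establish, and $\|w^0-w^*\|_H^2=D$ exactly, with no cross terms. With that substitution, the rest of your outline (the residual identity $\mathcal{A}^*(y^{k+1})+S^{k+1}-C=\gamma(X^{k+1}-X^k)$, pairing the residual with $X^*$, complementary slackness, and Jensen for the ergodic part) goes through. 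Two further remarks: for an initialization with $\langle X^0,S^0\rangle\neq 0$ the ADMM sequence is not a DRS trajectory from any $Z^0$ at all (it becomes one only after the first iteration), which is another reason the $H$-norm formulation is the right one for the proposition as stated; and under the zero initialization $(X^0,y^0,S^0)=(0,0,0)$ used in the downstream corollaries the cross terms vanish, so your route does recover the complexity consequences, just not the proposition for general starting points.
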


A direct consequence of the above proposition is the following iteration complexity of the classical ADMM algorithm for SDP.
\begin{corollary}
    To achieve an $\epsilon_{\mathrm{abs}}$-accuracy solution,
    the classical ADMM algorithm with initialization $(X^0, S^0)$ = $(0, 0)$ requires 
    $$
    K = \mathcal{O}\left(\frac{R_X^2 + R_S^2}{\epsilon_{\mathrm{abs}}}\right) 
    $$
    iterations, where $R_X, R_S$ are the upper bounds of $\| X^*\|$ and $\| S^* \|$, respectively.
\end{corollary}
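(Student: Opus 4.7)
The plan is to instantiate the ergodic convergence bounds from part~3 of the preceding proposition with the zero initialization and then invert the resulting inequalities to extract the iteration count. Starting from $(X^0,S^0)=(0,0)$ the constant $D=\gamma\|X^0-X^*\|^2+\gamma^{-1}\|S^0-S^*\|^2$ collapses to $\gamma\|X^*\|^2+\gamma^{-1}\|S^*\|^2$. Using the assumed norm bounds $\|X^*\|_F\le R_X$ and $\|S^*\|_F\le R_S$ and fixing the penalty parameter $\gamma=1$, this gives the clean estimate $D\le R_X^{2}+R_S^{2}$.

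Next I would substitute this bound into the two ergodic inequalities. The objective gap becomes
$$
-b^\top \bar{y}^K+b^\top y^*\;\le\;\frac{D}{2K}+\frac{2\sqrt{D}\,R_X}{K},
$$
and the dual feasibility residual becomes
$$
\|\mathcal{A}^*(\bar{y}^K)+\bar{S}^K-C\|\;\le\;\frac{2\sqrt{D}}{K}.
$$
The only arithmetic subtlety is absorbing the cross term $\sqrt{D}\,R_X$ into $R_X^{2}+R_S^{2}$, which follows from the elementary inequality $R_X\le\sqrt{R_X^{2}+R_S^{2}}$ and hence $\sqrt{D}\,R_X\le R_X^{2}+R_S^{2}$. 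After this absorption both right-hand sides take the form $C\,(R_X^{2}+R_S^{2})/K$ (up to a lower-order term $\mathcal{O}(\sqrt{R_X^{2}+R_S^{2}}/K)$ from the feasibility bound), for an absolute constant $C$.

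To finish, I would require both bounds to fall below the target accuracy $\epsilon_{abs}$ and solve for $K$; the dominant requirement is $C\,(R_X^{2}+R_S^{2})/K\le\epsilon_{abs}$, which yields $K=\mathcal{O}((R_X^{2}+R_S^{2})/\epsilon_{abs})$ as claimed. The feasibility condition imposes only the weaker $K=\mathcal{O}(\sqrt{R_X^{2}+R_S^{2}}/\epsilon_{abs})$, so it is automatically satisfied. There is no real obstacle here beyond careful bookkeeping: the whole argument is a substitution followed by inversion, and the only place one could slip is in losing a factor of $\sqrt{R_X^{2}+R_S^{2}}$ by not recognizing that the cross term is dominated.
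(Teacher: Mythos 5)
Your proposal is correct and follows essentially the same route the paper intends: the corollary is stated as a direct consequence of part~3 (ergodic convergence) of the preceding proposition, and your instantiation with $(X^0,S^0)=(0,0)$, the bound $D\le R_X^2+R_S^2$, and the absorption of the cross term $\sqrt{D}\,R_X\le R_X^2+R_S^2$ are exactly the bookkeeping the paper leaves implicit. Your observation that the feasibility residual imposes only the weaker requirement is also consistent, so nothing is missing.
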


\begin{theorem}
    
To obtain an $\epsilon_{\mathrm{abs}}$-optimal solution that satisfies 
\begin{equation}
   \label{app:eq: dual epsilon optimal} 
- b^\top \bar y^K + b^\top y^* \leq \epsilon_{\mathrm{abs}}, \quad \left\|\mathcal{A}^*(\bar y^K) + \bar S^K - C\right\| \leq \epsilon_{\mathrm{abs}},
\end{equation}
the classical ADMM requires a running time of  
$$
 O\left(m^3 + m^2 n^\omega + (m^2 + s_A + n^\omega)\cdot \frac{R_X^2 + R_S^2}{\epsilon_{\mathrm{abs}}}\right), 
$$
where $R_X$ and $R_S$ are the upper bounds on the primal and dual optimal solutions $X^*, S^*$ respectively, $s_A$ is the number of non-zero entries in all the constraint matrices, and $\omega$ is the exponent of matrix multiplication.
\end{theorem}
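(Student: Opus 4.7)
The plan is to combine the iteration-count bound from the preceding corollary (namely $K=\mathcal{O}((R_X^2+R_S^2)/\epsilon_{abs})$ suffices for the averaged iterate to satisfy~\eqref{app:eq: dual epsilon optimal}) with a careful per-iteration cost accounting, distinguishing one-time preprocessing from repeated work. Because the iterates live in the primal-dual SDP space, I will first list every atomic operation the scheme~\eqref{app:eq: y in classical ADMM for SDP}--\eqref{app:eq: X in classical ADMM for SDP} performs each step, then multiply the per-step cost by $K$, and finally add the precomputation cost separately so that it appears as the additive term $m^3 + m^2 n^\omega$ in the final bound.

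Concretely, the plan for the setup phase is the following. The operator $\mathcal{A}\mathcal{A}^*$ is a fixed $m\times m$ symmetric positive-definite matrix whose $(i,j)$ entry is $\langle A^{(i)},A^{(j)}\rangle$; forming this Gram matrix amounts to a single $m\times n^2$ by $n^2\times m$ matrix product, which can be done in $\mathcal{O}(m^2 n^\omega)$ by block fast matrix multiplication (and trivially in $\mathcal{O}(m^2 n^2)$ when $m\le n^{\omega-2}$). Once formed, a Cholesky factorization of $\mathcal{A}\mathcal{A}^*$ costs $\mathcal{O}(m^\omega)\le\mathcal{O}(m^3)$ and is performed only once. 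This accounts for the standalone $m^3+m^2 n^\omega$ term, and henceforth each $y$-update only requires a back-substitution against the stored factors.

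For the per-iteration work I will analyze the three blocks separately. The $y$-update~\eqref{app:eq: y in classical ADMM for SDP} needs (i) the linear images $\mathcal{A}(X^k)$ and $\mathcal{A}(S^k-C)$, each computed in $\mathcal{O}(s_A+n^2)$ exploiting sparsity as already noted in the main text, and (ii) the back-substitution of total cost $\mathcal{O}(m^2)$. The $S$-update~\eqref{app:eq: S in classical ADMM for SDP} assembles $C-\mathcal{A}^*(y^{k+1})-\gamma X^k$ in $\mathcal{O}(s_A+n^2)$ and then applies the projection onto $\mathbb{S}_+^n$, which classically reduces to a symmetric eigendecomposition of an $n\times n$ matrix at a cost of $\mathcal{O}(n^\omega)$ (or the usual $\mathcal{O}(n^3)$); thresholding the eigenvalues and reassembling $U\Lambda^+ U^\top$ is dominated by that. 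The $X$-update~\eqref{app:eq: X in classical ADMM for SDP} is a single linear combination of already-available matrices, costing $\mathcal{O}(s_A+n^2)$. Summing, one outer iteration costs $\mathcal{O}(m^2+s_A+n^\omega)$.

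Finally I will combine: setup plus $K$ iterations yields
\[
\mathcal{O}\!\bigl(m^3+m^2 n^\omega\bigr)+K\cdot\mathcal{O}\!\bigl(m^2+s_A+n^\omega\bigr)
=\mathcal{O}\!\left(m^3+m^2 n^\omega+(m^2+s_A+n^\omega)\,\tfrac{R_X^2+R_S^2}{\epsilon_{abs}}\right),
\]
which is the claimed bound. The main obstacle is not the arithmetic but two bookkeeping issues: first, justifying that the cost of the projection onto $\mathbb{S}_+^n$ can legitimately be written as $\mathcal{O}(n^\omega)$ rather than $\mathcal{O}(n^3)$ (which I handle by invoking a fast symmetric eigensolver bound, matching the convention already used for IPMs in the excerpt), and second, making sure no term is double-counted or scaled by $K$ that should be paid only once, which is exactly why I isolate the Gram-matrix assembly and factorization as a one-time preprocessing step before unrolling the iteration budget.
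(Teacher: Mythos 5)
Your proposal is correct and follows essentially the same route as the paper's own proof: a one-time assembly of the Gram matrix $\mathcal{A}\mathcal{A}^*$ plus Cholesky factorization accounting for the $m^3+m^2n^\omega$ term, a per-iteration cost of $\mathcal{O}(m^2+s_A+n^\omega)$ from back-substitution, the sparse linear maps, and the $n\times n$ eigendecomposition, all multiplied by the iteration bound $K=\mathcal{O}((R_X^2+R_S^2)/\epsilon_{abs})$ from the preceding corollary. Your accounting of the linear-map cost as $\mathcal{O}(s_A+n^2)$ is in fact slightly more careful than the proof's in-line statement (which the paper's own appendix supports), and it makes no difference to the final bound since $n^2\le n^\omega$.
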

\begin{proof}
    Assume we compute $\mathcal{A} \mathcal{A}^*$ and
prepare a Cholesky factorization for it, which has a computational cost of $\mathcal{O}(m^2 n^\omega + m^3)$. 
Now we consider the complexity of each iteration. 
Solving the linear system $\mathcal{A A}^* y = U$ requires $\mathcal{O}(m^2)$ time. The eigenvalue decomposition of size $n$ has the same complexity as matrix multiplication, i.e., $\mathcal{O}(n^\omega)$, where $\omega$ is the exponent of matrix multiplication. Hence the projection onto $\mathbb{S}_+^n$ requires $\mathcal{O}(n^\omega)$ time.

Assume $\mathbf{A}$ has $s_A$ non-zero entries. 
As shown in Appendix~\ref{app:sec: linear transformation}, the costs of computing $\mathcal{A}(X)$ and $\mathcal{A}^*(y)$ are both $\mathcal{O}(s_A)$.
Therefore,
step~\eqref{app:eq: y in classical ADMM for SDP} requires $\mathcal{O}(m^2 + s_A + n^2)$ time, step~\eqref{app:eq: S in classical ADMM for SDP} requires $\mathcal{O}(n^\omega + s_A + n^2)$ time, and step~\eqref{app:eq: X in classical ADMM for SDP} requires $\mathcal{O}(s_A + n^2)$ time. Therefore the total complexity of each iteration is $\mathcal{O}(m^2 + s_A + n^\omega)$.
Combining the iteration complexity and the iteration cost, we have that the total complexity of the classical ADMM algorithm is
$$
 O\left(m^3 + m^2 n^\omega + (m^2 + s_A + n^\omega) \cdot \frac{R_X^2 + R_S^2}{\epsilon_{\mathrm{abs}}}\right).
$$
This completes the proof.
\end{proof}

\section{Implementation details for QADMM}
\label{app:sec: implementation details for QADMM}
\subsection{Classical implementation of linear mapping} 
\label{app:sec: linear transformation}

The $\mathrm{svec}$ operator transforms an $n \times n$ symmetric matrix $X$ to a vector $\mathrm{svec}(X) \in \mathbb{R}^{n(n+1)/2}$, which contains the upper triangular part of $X$ excluding the diagonal. The $\mathrm{smat}$ operator transforms a vector $\mathrm{svec}(X)$ back to a symmetric matrix $X \in \mathbb{S}^n$.
More  specifically, the $\mathrm{svec}$ and $\mathrm{smat}$ operators are defined as follows:
\begin{subequations}
    \begin{align}
    & \mathrm{svec}\left(
        X \right) = \left( X_{11}, X_{12}, \ldots, X_{1n}, X_{22}, \ldots, X_{2n}, \ldots, X_{nn} \right)^{\top}, \label{app:eq: svec} \\
    & \mathrm{smat}\left( (X_{11}, X_{12}, \ldots, X_{1n}, X_{22}, \ldots, X_{2n}, \ldots, X_{nn} )^\top\right) = X
    \label{app:eq: smat}.
    \end{align}
\end{subequations}

On classical computers, the linear operator 
$$
\mathcal{A}: \mathbb{S}^n \to \mathbb{R}^m, \quad \mathcal{A}(X) = \left( \langle A^{(1)}, X \rangle, \langle A^{(2)}, X \rangle, \ldots, \langle A^{(m)}, X \rangle \right)^\top
$$
can be represented as a matrix $\boldsymbol{A} \in \mathbb{R}^{m \times n(n+1)/2}$, where the $i$-th row of $\boldsymbol{A}$ is $\mathrm{svec}(A^{(i)})$. 
Given a symmetric matrix $X \in \mathbb{S}^n$ and a vector $y \in \mathbb{R}^m$, the linear operator $\mathcal{A}$ and its adjoint operator $\mathcal{A}^*$ are computed as follows:
\begin{equation}
    \mathcal{A}(X) = A \mathrm{svec}(X), \quad \mathcal{A}^*(y) = \mathrm{smat}(\boldsymbol{A}^{\top} y).
\end{equation}

Assume $\boldsymbol{A} \in \mathbb{R}^{m \times n(n+1)/2}$ is a sparse matrix with $s_A$ non-zero entries. 

\textbf{Complexity of computing $\mathcal{A}(X)$}

\noindent
The cost of computing $\mathcal{A}(X)$ is the sum of the vectorization cost of $\mathrm{svec}(X)$ and the matrix-vector multiplication cost of $\boldsymbol{A}(X)$. The vectorization cost is $\mathcal{O}(n^2)$, and the matrix-vector multiplication cost is $\mathcal{O}(s_A)$. Therefore, the total cost of computing $\mathcal{A}(X)$ is $T_{A} = \mathcal{O}(s_A + n^2)$. As a special case, if $\boldsymbol{A}$ is a dense matrix, then the total cost of computing $\mathcal{A}(X)$ is $\mathcal{O}(mn^2 + n^2) = \mathcal{O}(mn^2)$.

\textbf{Complexity of computing $\mathcal{A}^*(y)$}

\noindent
The cost of computing $\mathcal{A}^*(y)$ is the sum of the matrix-vector multiplication cost of $\boldsymbol{A}^{\top} y$ and the matrix reconstruction cost of $\mathrm{smat}(\boldsymbol{A}^{\top} y)$. The matrix-vector multiplication cost is $\mathcal{O}(s_A)$, and the matrix reconstruction cost is $\mathcal{O}(n^2)$. Therefore, the total cost of computing $\mathcal{A}^*(y)$ is $T_{A^*} = \mathcal{O}(s_A + n^2)$. As a special case, if $\boldsymbol{A}$ is a dense matrix, then the total cost of computing $\mathcal{A}^*(y)$ is $\mathcal{O}(mn^2 + n^2) = \mathcal{O}(mn^2)$.

\subsection{Linear system in y-update}
Once $u^{k+1}$ has been computed classically and the normalized quantum state $| \hat u^{k+1}\rangle$ prepared, we then solve the linear system
$$
(\mathcal{A}\mathcal{A}^*) \frac{\hat y^{k+1}}{\|u^{k+1}\|} =  -|\hat u^{k+1}\rangle \quad \text{  or  } \quad (\mathbf{A}\mathbf{A}^\top) \frac{\hat y^{k+1}}{\|u^{k+1}\|} =  -|\hat u^{k+1}\rangle.
$$
However, the block-encoding of $\mathcal{A}\mathcal{A}^*$ is not available directly.
The preparation of $|\hat u^{k+1}\rangle$ uses the classical vector $u^{k+1}$ already formed in the $y$-update. Loading it into the state-preparation data structure costs $\mathcal{O}(m)$ classical writes per iteration, and this cost is included in the classical part of the complexity bound.
By introducing an auxiliary variable $t \in \mathbb{R}^{n^2}$, the linear system is equivalent to 
$$
\begin{bmatrix}
    0 & \mathbf{A} \\
    \mathbf{A}^\top & - \mathrm{I}_{n^2}
\end{bmatrix}
\begin{bmatrix}
    \frac{\hat y^{k+1}}{\|u^{k+1}\|} \\
    t
\end{bmatrix}
=
\begin{bmatrix}
    -|\hat u^{k+1}\rangle \\
    0
\end{bmatrix}.
$$
By Lemma 50 in~\cite{gilyen2019quantum}, if $\mathbf{A}$ is stored in QRAM, then a $(\|\mathbf{M}\|_{\mathrm{F}}, \mathcal{O}(\log(n)), \epsilon)$-block-encoding of the matrix $\mathbf{M} := \begin{bmatrix}
    0 & \mathbf{A} \\
    \mathbf{A}^\top &  - \mathrm{I}_{n^2}
\end{bmatrix}$ can be constructed in time $\tilde{\mathcal{O}}_{\frac{n}{\epsilon}}(1)$. Since this coefficient is independent of the number of iterations, we can construct the block-encoding of $\mathbf{M}$ once and use it in each iteration.
After that, we employ QLS to solve the linear system and perform quantum tomography to extract the information of $\hat{y}^{k+1}$.

\subsection{LCU in V-update}

Let the 
$(\alpha_i, \lceil \log(n) \rceil + 2, \xi)$-block-encodings of $A^{(1)}, \ldots, A^{(m)}, C, X^{k}$ be denoted by $U_{A^{(i)}}, U_C, U_{X^{k}}$, where $\alpha_i = \|A^{(i)}\|_{\mathrm{F}}$ and $\alpha_{m+1} = \|C\|_{\mathrm{F}}$, $\alpha_{m+2} = \|X^k\|_{\mathrm{F}}$.
Our goal is to implement an LCU circuit to obtain the matrix
\begin{equation}
\label{app:eq: lcu}
\sum_{i=1}^m \alpha_i y^{k+1}_i U_{A^{(i)}}  + 
\alpha_{m+1} U_{C} - \gamma \alpha_{m+2}  U_{X^k}.
\end{equation}

Let $r = \lceil \log(m) \rceil + 2$ be the number of ancilla qubits. Define
$
e_V^{k+1}=\sum_{i=1}^{m}\alpha_i |y_i^{k+1}|+\alpha_{m+1}+\gamma\alpha_{m+2}
$
and $\sigma_i^{k+1}=1$ if $y_i^{k+1}\geq 0$ and $\sigma_i^{k+1}=-1$ otherwise. Define preparation operator $O_{\rm prep}$ and select operators $O_{\rm sel}^{(1)}$, $O_{\rm sel}^{(2)}$ as follows: 
\begin{equation}
    \begin{aligned}
O_{\text {prep }}&:|0\rangle \mapsto \frac{1}{\sqrt{e_V^{k+1}}} \left(\sum_{j=0}^{m-1} \sqrt{\alpha_{j+1} |y_{j+1}^{k+1}|}|j\rangle + \sqrt{\alpha_{m+1} }|m\rangle + \sqrt{\gamma \alpha_{m+2}}|m+1\rangle \right), \\
O_{\text {sel }}^{(1)}&=\sum_{j=0}^{m-1}|j\rangle\langle j| \otimes \sigma_{j+1}^{k+1}U_{A^{(j+1)}} + |m\rangle\langle m| \otimes U_C + |m+1\rangle\langle m+1| \otimes \mathrm{Id}, \\
O_{\text {sel }}^{(2)}&= \sum_{j=0}^{m-1}|j\rangle\langle j| \otimes \mathrm{Id} + |m\rangle\langle m| \otimes \mathrm{Id} - 
|m+1\rangle\langle m+1| \otimes U_{X^k} .
    \end{aligned}
\end{equation}
A demonstration of the LCU is given below.

\begin{quantikz}[row sep=1cm, column sep=0.8cm]
  \lstick{$\ket{0}^{\otimes r}$}
    & \gate{O_{\rm prep}}
    & \gate[wires=2]{O_{\rm sel}^{(1)}}
    & \gate[wires=2]{O_{\rm sel}^{(2)}}
    & \gate{O_{\rm prep}^\dagger}
    & \rstick{ancilla}
  \\
  \lstick{$\ket{\psi}$}
    & \qw
    & \qw
    & \qw
    & \qw
    & \rstick{system}
\end{quantikz}

The preparation operator $O_{\rm prep}$ can be efficiently constructed using QRAM. 
Since $A^{(i)}$ and $C$ are fixed matrices, we can construct $O_{sel}^{(1)}$ by one query to the controlled version of the block-encodings of $A^{(i)}$ and $C$.

\section{Polynomial proximal operator}

\begin{lemma}
    \label{app:lem: polynomial approximation}
    1. For any $\epsilon > 0$, there exists a monotone increasing polynomial $g(x)$ on $[-1, 1]$ of degree $d = \mathcal{O}(\frac{1}{\epsilon})$ such that $g(-1)= 0$ and
    \begin{equation}
        \label{app:eq: polynomial approximation}
         | \max(0, x) - g(x) | \leq \epsilon, \quad \forall x \in [-1, 1].
    \end{equation}
    
    2. Let $\boldsymbol{g}$ be the corresponding spectral operator of $g$.
    Then $\boldsymbol{g}$ is a polynomial eigenvalue transformation on $\mathbb{S}^n$ and satisfies
    \begin{equation}
        \label{app:eq: polynomial approximation for SDP}
         \| \mathrm{Proj}_{\mathbb{S}_+^n}(X) - \boldsymbol{g}(X) \|_2 \leq \epsilon, \quad \forall X \in 
         \{ X \in \mathbb{S}^n: \| X \|_2 \leq 1\}  .
    \end{equation}
    \end{lemma}
\begin{proof}
    1.
    Denote $g_0 = \max(x, 0), x \in [-1 ,1]$. A well-known result in approximation theory states that a piecewise monotonic function $g$ can be approximated by a polynomial function with the same monotonicity~\cite{passow1974copositive, passow1974monotone, newman1979efficient}. By Lemma in\cite{passow1974copositive} there exists a sequence of polynomials $\{p_n\}_{n=1}^\infty$ such that
    \begin{equation}
        \|g_0 - p_n\|_\infty \leq C_2 M \frac{1}{n} ,
    \end{equation}
    where $C_2$ is an absolute constant, $\omega(g_0; \frac{1}{n})$ is the modulus of continuity of $g_0$ on the interval $[-1, 1]$, and $M = 1$ is the Lipschitz constant of $g_0$. 
    The construction of the polynomial $p_n$ is discussed in Section 3 of~\cite{passow1974copositive}. 
    This is equivalent to the following statement: for any $\epsilon' > 0$, there exists a monotone increasing polynomial $p_n$ of degree $n = \mathcal{O}(\frac{1}{\epsilon'})$ such that
    \begin{equation}
        \|g_0 - p_n\|_\infty \leq \epsilon' .
    \end{equation}
    Our goal is to find a polynomial such that it also satisfies $g(-1)=0$ and $g(x) \geq 0$ on $[-1,1]$. This can be achieved by the following transformation:
    $$
    g(x) = p_n(x) - p_n(-1) .
    $$
    Since $p_n$ is monotone increasing, $g$ is monotone increasing and $g(x)\geq g(-1)=0$ on $[-1,1]$. Take $\epsilon' = \frac{1}{2} \epsilon$; then, using $g_0(-1)=0$,
    $$
    | g(x) - g_0(x) | \leq |p_n(x)-g_0(x)| + |p_n(-1)-g_0(-1)| \leq 2 \epsilon' = \epsilon.
    $$ 
    
    \noindent
    2.
    By eigenvalue decomposition as $X = U \mathrm{diag}(\lambda_1, \ldots, \lambda_n) U^\top$, we have
    $$
    \| \mathrm{Proj_{\mathbb{S}_+^n}}(X) - \boldsymbol{g}(X) \|_2 = 
    \max_{1 \leq i\leq n} | g_0(\lambda_i) - g(\lambda_i) | \leq \epsilon .
    $$
    This completes the proof.
\end{proof}

Strict monotonicity in the following lemma is used only for the variational interpretation. If the polynomial approximation above is merely monotone on $[-1,1]$, we may add an arbitrarily small strictly increasing perturbation and extend it smoothly to $\mathbb R$; the additional approximation error is absorbed into $\delta_{\tilde S}$ and $\delta_{\tilde X}$.

\begin{lemma}
    \label{app:lem: proximal operator construction}
    Let $g: \mathbb{R} \to \mathbb{R}$ be a smooth function satisfying $g'(x)>0$ for all $x \in \mathbb{R}$. Denote $g^{-1}$ as the inverse function of $g$.
    Then 
    \begin{equation}
        \label{app:eq: proximal operator construction for scalar}
    h(x) = 
    \begin{cases}
        \frac{1}{\gamma} \int_{0}^{x} (g^{-1}(z) - z) dz , & \text{if } x \geq 0, \\
        \infty, & \text{if } x < 0,    
    \end{cases}
    \end{equation}
    satisfies $(1 + \gamma \partial h(x))^{-1} = g(x)$.
    
    \noindent
    2. 
    For $X \in \mathbb{S}^n$ with eigenvalue decomposition $X = U \mathrm{diag}(\lambda_1, \ldots, \lambda_n) U^\top$, where $U$ is a unitary matrix and $\lambda_i$ are the eigenvalues of $X$.
    Let $\boldsymbol{g}(X) = U \mathrm{diag}(g(\lambda_1), \ldots, g(\lambda_n)) U^\top$ be the eigenvalue transformation with respect to $g$.
    Define $\boldsymbol{h}: \mathbb{S}^n \to \mathbb{R}$ as
    $\boldsymbol{h}(X) = \sum_{i=1}^{n} h(\lambda_i)$, where $h$ is defined in~\eqref{app:eq: proximal operator construction for scalar}.
    Then $\boldsymbol{h}$ satisfies $(\mathrm{Id}+\gamma\partial \boldsymbol{h})^{-1}(S)=\boldsymbol{g}(S)$.
\end{lemma}

\begin{proof}
    \noindent
    1.
    The continuity and monotonicity of $g$ implies that $g^{-1}$ is also continuous and strictly monotonically increasing. From the definition of the proximal operator, we have
    \begin{equation}
         g(x) = \arg \min_{u \in \mathbb{R}} \left\{\frac{1}{2}\|x - u\|^2 + \gamma h(u)\right\} .
    \end{equation}
    This is equivalent to
    $ 0 \in g(x) - x + \gamma \partial h(g(x)) $ or,
    after replacing $g(x)$ by a generic argument $x$,
    $  \frac{1}{\gamma} \left(g^{-1}(x) - x\right) \in \partial h(x)$.
    Therefore the design of $h$ satisfies
    $$
    (1+\gamma\partial h)^{-1}(x)=g(x) .
    $$
    
    \noindent
    2.
    By eigenvalue decomposition as $X = U \mathrm{diag}(\lambda_1, \ldots, \lambda_n) U^\top$, 
    \begin{equation}
        \begin{aligned}
             \boldsymbol{g}(X) = \arg \min_{Y \in \mathbb{S}^n} \left\{\frac{1}{2}\|X - Y\|_{\mathrm{F}}^2 + \gamma \boldsymbol{h}(Y)\right\} 
        \end{aligned}
    \end{equation}
    is equivalent to the following optimization problem:
    $$
            \mathrm{diag}(g(\lambda_1), \ldots, g(\lambda_n)) = \arg \min_{Y'\in \mathbb{S}^n} \left\{\frac{1}{2}\| U \mathrm{diag}(\lambda_1, \ldots, \lambda_n) U^\top - U Y' U^\top\|_{\mathrm{F}}^2 + \gamma \sum_{i=1}^{n} h(\mu_i(Y'))\right\} ,
    $$
    where $\mu_i(Y')$ are the eigenvalues of $Y'$. Since both the Frobenius norm and $\boldsymbol h$ are spectral functions, the minimizer shares the eigenvectors of $X$ and the problem reduces to the scalar problems. The minimum is achieved if and only if  $ g(\lambda_i) = \arg \min_{u \in \mathbb{R}} \left\{\frac{1}{2}\|\lambda_i - u\|^2 + \gamma h(u)\right\}, i = 1, \ldots, n $.
    Combining the above equations with the result in part 1, we have the desired result.
    \end{proof}

\section{Complexity analysis}
\label{sec: complexity of QADMM for SDP}
\subsection{Iteration number of QADMM for SDP}

QADMM algorithm for solving
\begin{equation}
    \label{app:eq: dual SDP 2}
    \min_{y\in\mathcal{Y},S \in \mathcal{S}}  -b^\top y  \qquad \text{s.t.}\quad A^*(y) + S = C~,
    \end{equation}
iterates in the following scheme:
\begin{subequations}
    \begin{align}
        \hat y^{k+1} &=-\left(\mathcal{A A}^*\right)^{-1}(\gamma(\mathcal{A}(X^k)-b)+\mathcal{A}(S^k-C)), \label{app:eq: hat y in QADMM for SDP} \\
         \tilde y^{k+1} & := \hat y^{k+1} + \Delta \tilde y^{k+1} \label{app:eq: tilde y in QADMM for SDP}, \quad  \|\Delta \tilde y^{k+1}\|_2 \leq \delta_{\tilde y}, \\
         y^{k+1} & := \mathrm{Proj}_{\mathcal{Y}}(\tilde y^{k+1} + \Delta y^{k+1}) \label{app:eq: y in QADMM for SDP}, \quad  \|\Delta y^{k+1}\|_2 \leq \delta_y, \\
         \hat V^{k+1} &:= C-\mathcal{A}^*(\tilde y^{k+1})-\gamma X^k, \label{app:eq: hat V in QADMM for SDP} \\
         \tilde V^{k+1} & := \hat V^{k+1} + \Delta V^{k+1} \label{app:eq: tilde V in QADMM for SDP} , \quad  \|\Delta V^{k+1}\|_{\mathrm{F}} \leq \delta_V, \\
         \tilde S^{k+1} & := \boldsymbol{g}_1(\tilde V^{k+1}) ,\label{app:eq: tilde S in QADMM for SDP} \\
        S^{k+1} & := \mathrm{Proj}_{\mathcal{S}} (\tilde S^{k+1} + \Delta S^{k+1}) \label{app:eq: S in QADMM for SDP}, \quad  \|\Delta S^{k+1}\|_{\mathrm{F}} \leq \delta_S, \\
        \tilde X^{k+1} & := -\frac{1}{\gamma}\boldsymbol{g}_2(\tilde V^{k+1}) ,\label{app:eq: tilde X in QADMM for SDP} \\
        X^{k+1} & := \mathrm{Proj}_{\mathcal{X}}(\tilde X^{k+1} + \Delta X^{k+1}) \label{app:eq: X in QADMM for SDP}, \quad  \|\Delta X^{k+1}\|_{\mathrm{F}} \leq \delta_X.
    \end{align}
\end{subequations}
The variables with tilde notations $\tilde y^{k+1}, \tilde{S}^{k+1}, \tilde{X}^{k+1}$ stand for quantum representations and variables without any notation $y^{k+1}, S^{k+1}, X^{k+1}$ stand for classical representations. Additionally, variables with hat notations $\hat y^{k+1}, \hat V^{k+1}$ are intermediate variables.
The error terms $\Delta \tilde y^{k+1}, \Delta y^{k+1}, \Delta V^{k+1}, \Delta S^{k+1}, \Delta X^{k+1}$ in~\eqref{app:eq: hat y in QADMM for SDP}-\eqref{app:eq: X in QADMM for SDP}
are used to account for the inexactness, which are bounded by $\delta_{\tilde y}, \delta_y, \delta_V, \delta_S, \delta_X$, respectively.
Let 
\begin{subequations}
    \begin{align}
\hat S^{k+1} &= \mathrm{Proj}_{\mathbb{S}_+^n}(\tilde V^{k+1}), \label{app:eq: hat S in QADMM for SDP} \\
\Delta \tilde{S}^{k+1} &= \tilde{S}^{k+1} - \hat{S}^{k+1}, \\
\hat X^{k+1} &= -\frac{1}{\gamma}(\mathrm{Id} - \mathrm{Proj}_{\mathbb{S}_+^n})(\tilde V^{k+1}) = \frac{1}{\gamma}( \hat S^{k+1} - \tilde V^{k+1}), \label{app:eq: hat X in QADMM for SDP} \\
\Delta \tilde{X}^{k+1} &= \tilde{X}^{k+1} - \hat{X}^{k+1}, 
\end{align}
\end{subequations}
be the error term between the polynomial QSVT and projection operator. 
By Lemma~\ref{app:lem: polynomial approximation}, there exists a polynomial $\boldsymbol{g}_1$ with degree $\mathcal{O}(\frac{B}{\epsilon})$ such that $\| \mathrm{Proj}_{\mathbb{S}_+^n}(V) - \boldsymbol{g}_1(V) \|_2 \leq \epsilon$ for any $V\in \{ V \in \mathbb{S}^n \mid \| V \|_2 \leq B\}$. 
Similar properties hold for $\boldsymbol{g}_2$. Assume $\tilde{V}^{k+1}$ has a uniform bound $B$. After we choose proper polynomials $\boldsymbol{g}_1$ and $\boldsymbol{g}_2$ with degree $\mathcal{O}(\frac{B}{\delta_{\tilde S}})$, $\mathcal{O}(\frac{B}{\delta_{\tilde X}})$, respectively, it holds that
\begin{equation}
    \begin{aligned}
        \| \Delta \tilde{S}^{k+1} \|_2 & \leq \delta_{\tilde S}, \\
        \| \Delta \tilde{X}^{k+1} \|_2 & \leq \delta_{\tilde X}. 
    \end{aligned}
\end{equation}

\begin{theorem}
    \textbf{(ergodic convergence)}
    \label{app:thm: ergodic convergence of QADMM for SDP}
    Assume $(X^*, y^*, S^*)$ satisfies the optimality conditions.
    The average iterate $(\bar{X}^K, \bar{y}^K, \bar{S}^K) := \frac{1}{K} \sum_{k=0}^{K-1} (\tilde X^k, \hat y^k, \tilde S^k)$  
     generated by QADMM satisfies
    \begin{subequations}
    \begin{align}
    f\left(\bar{y}^K\right)-f\left(y^*\right) & \leq \mathcal{O}\left(\frac{\left\|X^0 - X^*\right\|_{\mathrm{F}}^2+\left\| S^0-S^*\right\|_{\mathrm{F}}^2}{K} +\delta \right)\label{app:eq: ergodic convergence of QADMM for SDP 1}, \\
    \left\|\mathcal{A}^*( \bar{y}^K ) + \bar{S}^K-C\right\|_{\mathrm{F}} & \leq \mathcal{O}\left(\frac{\left\|X^0-X^*\right\|_{\mathrm{F}}+\left\|S^0-S^*\right\|_{\mathrm{F}}}{K}+\delta\right) , \label{app:eq: ergodic convergence of QADMM for SDP 2} 
    \end{align} 
    \end{subequations}
     where $\delta = \Theta(\delta_{\tilde y} + \delta_y + \delta_V +\delta_X + \delta_S + \delta_{\tilde S} + \delta_{\tilde X})$ is the accumulated ADMM inexactness term. The QLS accuracy parameter $\delta_{\hat y}$ is chosen so that the induced iterate error satisfies $\delta_{\tilde y}=\Theta(\delta_{\hat y})$.
\end{theorem}

\begin{proof} For notational convenience, denote $z^k := \left(\tilde X^k, \hat y^k, \tilde S^k\right), z := (X, y, S)$, $\tilde X^0 = X^0$, $\tilde S^0 = S^0$, and $\hat y^0 = y^0$.
Let the primal-dual gap function be
\begin{equation}
    \label{app:eq: primal-dual gap function}
\begin{aligned}
Q\left(z^k, z\right)= & f\left(\hat y^k\right) +\langle X, \mathcal{A}^*(\hat y^k)+ \tilde S^k-C\rangle 
 -f(y) -\langle \tilde X^k, \mathcal{A}^*(y)+ S-C\rangle .
\end{aligned}
\end{equation}
By~\eqref{app:eq: hat S in QADMM for SDP} and the property of projection operator, we have
$$
\langle \hat S^{k+1} - \tilde V^{k+1}, \hat S^{k+1}-S\rangle \leq 0, \quad \forall S \in \mathbb{S}_+^n.
$$
This together with~\eqref{app:eq: hat X in QADMM for SDP} yields
\begin{equation}
\label{app:eq: proof of ergodic convergence of QADMM for SDP 2}    
\langle \hat X^{k+1} , \hat S^{k+1}-S\rangle \leq 0.
\end{equation}

It follows from~\eqref{app:eq: hat y in QADMM for SDP}--\eqref{app:eq: X in QADMM for SDP} that
\begin{equation}
    \label{app:eq: proof of ergodic convergence of QADMM for SDP 1}
    \begin{aligned}
            b
        &=  \mathcal{A}(X^k + \frac{1}{\gamma}
            (\mathcal{A}^*(\hat y^{k+1}) + S^{k} - C)) \quad \text{(by~\eqref{app:eq: hat y in QADMM for SDP})} \\
        &=  \mathcal{A}(X^k + \frac{1}{\gamma}
        (\mathcal{A}^*(\tilde y^{k+1} - \Delta \tilde y^{k+1}) + S^{k} - C)) \quad \text{(by~\eqref{app:eq: tilde y in QADMM for SDP})} \\
        &= \mathcal{A}( X^{k}) + \mathcal{A}(\hat X^{k+1} - X^{k} - \frac{1}{\gamma} (\hat S^{k+1} - S^{k}))
        \quad \text{(by~\eqref{app:eq: hat V in QADMM for SDP} and~\eqref{app:eq: hat X in QADMM for SDP})} \\
        &=  \mathcal{A}( \hat X^{k+1} - \frac{1}{\gamma} (\hat S^{k+1} - S^{k})).
    \end{aligned}
\end{equation}

Combining this with~\eqref{app:eq: primal-dual gap function} and~\eqref{app:eq: proof of ergodic convergence of QADMM for SDP 2}, we have
$$
\begin{aligned}
 Q\left(z^{k+1}, z\right) 
 & = -b^\top(\hat y^{k+1} - y)  + \langle X, \mathcal{A}^* (\hat y^{k+1})+ \tilde S^{k+1}-C\rangle - \langle \tilde X^{k+1}, \mathcal{A}^* (y )+ S-C\rangle \\
& \leq -b^\top(\hat y^{k+1} - y)  + \langle X, \mathcal{A}^* (\hat y^{k+1})+ \tilde S^{k+1}-C\rangle - \langle \tilde X^{k+1}, \mathcal{A}^* (y )+ S-C\rangle \\
& \quad - \langle \hat X^{k+1} , \hat S^{k+1}-S\rangle \quad \text{(by~\eqref{app:eq: proof of ergodic convergence of QADMM for SDP 2})} \\
& = - \langle \hat X^{k+1} - \frac{1}{\gamma} (\tilde S^{k+1} - S^{k}), \mathcal{A}^*\left(\hat y^{k+1}-y\right)\rangle - \langle \Delta \tilde S^{k+1}, \mathcal{A}^*\left(\hat y^{k+1}-y\right)\rangle\\
&\quad   + \langle X, \mathcal{A}^* (\hat y^{k+1})+ \tilde S^{k+1}-C\rangle - \langle \tilde X^{k+1}, \mathcal{A}^* (y )+ S-C\rangle  - \langle \hat X^{k+1} , \tilde S^{k+1}-S\rangle\\
&\quad + \langle \hat X^{k+1}, \Delta \tilde S^{k+1}\rangle
\quad \text{(by~\eqref{app:eq: proof of ergodic convergence of QADMM for SDP 1} and~\eqref{app:eq: tilde S in QADMM for SDP})} \\
&= - \langle \hat X^{k+1} - \frac{1}{\gamma} (\hat S^{k+1} - S^{k}), \mathcal{A}^*\left(\hat y^{k+1}-y\right)\rangle  + \langle X - \tilde X^{k+1}, \mathcal{A}^* (\hat y^{k+1})+  \tilde S^{k+1}-C\rangle \\
&\quad + \langle\tilde X^{k+1}, \mathcal{A}^* (\hat y^{k+1})+  \tilde S^{k+1}-C\rangle - \langle \tilde X^{k+1}, \mathcal{A}^* (y )+ S-C\rangle - \langle \hat X^{k+1} , \tilde S^{k+1}-S\rangle \\
&\quad + \langle \Delta \tilde S^{k+1}, \hat X^{k+1} - \mathcal{A}^*\left(\hat y^{k+1}-y\right)\rangle \\
&= \langle X - \tilde X^{k+1}, \mathcal{A}^* (\hat y^{k+1})+  \tilde S^{k+1}-C\rangle + \frac{1}{\gamma} \langle \hat S^{k+1}-S^{k}, \mathcal{A}^*\left(\hat y^{k+1}-y\right)\rangle \\
&\quad + \langle \tilde X^{k+1} - \hat X^{k+1} , \mathcal{A}^*(\hat y^{k+1}) + \tilde S^{k+1} - C\rangle 
+ \langle \Delta \tilde S^{k+1}, \hat X^{k+1} - \mathcal{A}^*\left(\hat y^{k+1}-y\right)\rangle \\
&= \gamma \langle X - \tilde X^{k+1}, \hat X^{k+1} - X^k +\frac{1}{\gamma} \Delta V^{k+1} \rangle + \frac{1}{\gamma} \langle \hat S^{k+1}-S^{k}, \mathcal{A}^*\left(\hat y^{k+1}-y\right)\rangle \\
&\quad +\gamma \langle \Delta \tilde X^{k+1} , \hat X^{k+1} - X^k\rangle + \langle \Delta \tilde S^{k+1}, \hat X^{k+1} - \mathcal{A}^*\left(\hat y^{k+1}-y\right)\rangle  \quad \text{(by~\eqref{app:eq: hat X in QADMM for SDP},~\eqref{app:eq: tilde X in QADMM for SDP},~\eqref{app:eq: hat V in QADMM for SDP})} \\
&\leq \gamma \langle X- \tilde X^{k+1}, \tilde X^{k+1}- \tilde X^{k}\rangle+\frac{1}{\gamma}\langle \tilde S^{k+1}-\tilde S^{k}, \mathcal{A}^*\left(\hat y^{k+1}-y\right)\rangle + e^{(1)} ,
\end{aligned}
$$
where $e^{(1)}
= 2\delta_V R_X + \delta_{\tilde X} R_X + \delta_{\tilde S}  (R_X + 2 R_y)
$ is the upper bound of $ \langle X - \tilde X^{k+1}, \Delta V^{k+1}\rangle + \gamma \langle \Delta \tilde X^{k+1} , \hat X^{k+1} - X^k\rangle + \langle \Delta \tilde S^{k+1}, \hat X^{k+1} - \mathcal{A}^*\left(\hat y^{k+1}-y\right)\rangle  $.
Note that
$$
2\langle X-\tilde X^{k+1}, \tilde X^{k+1}-\tilde X^{k}\rangle=\left\|X-\tilde X^{k}\right\|_{\mathrm{F}}^2-\left\|X-\tilde X^{k+1}\right\|_{\mathrm{F}}^2-\left\|\tilde X^{k}-\tilde X^{k+1}\right\|_{\mathrm{F}}^2 .
$$
and
$$
\begin{aligned}
 2\langle \tilde S^{k+1}-\tilde S^{k}, \mathcal{A}^*(\hat y^{k+1}-y)\rangle 
& =\left\|\mathcal{A}^* (y)+ \tilde S^{k}-C\right\|_{\mathrm{F}}^2
-\left\|\mathcal{A}^* (y)+ \tilde S^{k+1}-C\right\|_{\mathrm{F}}^2 \\
&\quad +\left\|\mathcal{A}^* (\hat y^{k+1})+ \tilde S^{k+1}-C\right\|_{\mathrm{F}}^2
-\left\|\mathcal{A}^* (\hat y^{k+1})+ \tilde S^{k}-C\right\|_{\mathrm{F}}^2 \\
& =\left\|\mathcal{A}^* (y)+ \tilde S^{k}-C\right\|_{\mathrm{F}}^2-\left\|\mathcal{A}^* (y)+ \tilde S^{k+1}-C\right\|_{\mathrm{F}}^2\\
&\quad +\gamma^2\left\|X^{k}- \hat X^{k+1} - \frac{1}{\gamma}\Delta V^{k+1}\right\|_{\mathrm{F}}^2-\left\|\mathcal{A}^* (\hat y^{k+1})+ \tilde S^{k}-C\right\|_{\mathrm{F}}^2 \\
& \leq \left\|\mathcal{A}^* (y)+ \tilde S^{k}-C\right\|_{\mathrm{F}}^2-\left\|\mathcal{A}^* (y)+ \tilde S^{k+1}-C\right\|_{\mathrm{F}}^2\\
&\quad +\gamma^2\left\|\tilde X^{k}- \tilde X^{k+1}\right\|_{\mathrm{F}}^2-\left\|\mathcal{A}^* (\hat y^{k+1})+ \tilde S^{k}-C\right\|_{\mathrm{F}}^2 + e^{(2)} ,
\end{aligned}
$$
where $e^{(2)} = \gamma^2 (\delta_X + \delta_{\tilde X} + \frac{1}{\gamma}\delta_V) (4R_X + \frac{1}{\gamma}\delta_V)$ is the upper bound of $ \gamma^2\left\| X^{k}- \hat X^{k+1} - \frac{1}{\gamma}\Delta V^{k+1}\right\|_{\mathrm{F}}^2-\gamma^2\left\|\tilde X^{k}- \tilde X^{k+1}\right\|_{\mathrm{F}}^2 $.
Thus we conclude that
$$
\begin{aligned}
&\quad Q\left(z^{k+1}, z\right)  \\
&\leq \frac{\gamma }{2}\left(\left\|\tilde X^{k}-X\right\|_{\mathrm{F}}^2-\left\|\tilde X^{k+1} -  X\right\|_{\mathrm{F}}^2\right) \\
& +\frac{1}{2\gamma}\left(\left\|\mathcal{A}^* y+ \tilde S^{k}-C\right\|_{\mathrm{F}}^2-\left\|\mathcal{A}^* (y)+ \tilde S^{k+1}-C\right\|_{\mathrm{F}}^2-\left\|\mathcal{A}^* (\hat y^{k+1})+ \tilde S^{k}-C\right\|_{\mathrm{F}}^2\right) + e^{(1)} + e^{(2)} \\
&\leq \frac{\gamma }{2}\left(\left\|\tilde X^{k}-X\right\|_{\mathrm{F}}^2-\left\|\tilde X^{k+1}-X\right\|_{\mathrm{F}}^2\right) \\
& +\frac{1}{2\gamma}\left(\left\|\mathcal{A}^* (y)+ \tilde S^{k}-C\right\|_{\mathrm{F}}^2-\left\|\mathcal{A}^* (y)+ \tilde S^{k+1}-C\right\|_{\mathrm{F}}^2\right) + e^{(1)} + e^{(2)} .
\end{aligned}
$$
Summing up the above inequality from $k=0, \ldots, K-1$, we obtain
\begin{equation}
    \label{app:eq: sum of primal-dual gap function}
\begin{aligned}
\sum_{k=1}^{K} Q\left(z^k, z\right) \leq & \frac{\gamma}{2}\left(\left\|\tilde X^0-X\right\|_{\mathrm{F}}^2-\left\|\tilde X^K-X\right\|_{\mathrm{F}}^2\right)  \\
& +\frac{1}{2\gamma}\left(\left\|\mathcal{A}^*(y) + \tilde S^{0} - C\right\|_{\mathrm{F}}^2-\left\|\mathcal{A}^*(y) + \tilde S^{K} - C\right\|_{\mathrm{F}}^2\right) + K e^{(1)} + K e^{(2)} .
\end{aligned}
\end{equation}
Setting $z=z^*$ in the above inequality and using the facts that 
$\mathcal{A}^*y^* + \tilde S^k - C = \tilde S^k - S^*$ and
$Q\left({z}^k, z^*\right) \geq 0$, we see that
$$
\left\|\tilde X^K -X^*\right\|_{\mathrm{F}}^2 \leq\left\|\tilde X^0-X^*\right\|_{\mathrm{F}}^2+\frac{1}{\gamma^2}\left\|\tilde S^0-S^*\right\|_{\mathrm{F}}^2 + \frac{2K}{\gamma} (e^{(1)} + e^{(2)} ) ,
$$
and hence that
\begin{equation}
    \label{app:eq: Xk - X0}
    \begin{aligned}
\left\|\tilde X^K-\tilde X^0\right\|_{\mathrm{F}} &\leq\left\|\tilde X^0-X^*\right\|_{\mathrm{F}}+\left\|\tilde X^K-X^*\right\|_{\mathrm{F}} \\
&\leq 2\left\|\tilde X^0-X^*\right\|_{\mathrm{F}}+\frac{1}{\gamma}\left\|\tilde S^0-S^*\right\|_{\mathrm{F}} + \sqrt{\frac{2K}{\gamma}( e^{(1)}+e^{(2)}) }.
\end{aligned}
\end{equation}
From the definition of $(\bar y^K, \bar S^K)$ and~\eqref{app:eq: hat X in QADMM for SDP},~\eqref{app:eq: tilde X in QADMM for SDP},~\eqref{app:eq: hat V in QADMM for SDP}, we have
\begin{equation}
    \label{app:eq: violation of average iterate}
    \begin{aligned}
     \left\| A^* (\bar y^K)  + \bar S^K - C \right\|_{\mathrm{F}} &= \left\| \frac{\gamma}{K} \sum_{k=0}^{K-1} (\tilde X^k - \tilde X^{k-1} - \Delta \tilde X^{k} - \Delta X^{k-1} + \frac{1}{\gamma} \Delta V^{k}) \right\|_{\mathrm{F}} \\
    &\leq \left\| \frac{\gamma}{K} ( \tilde X^K - \tilde X^0 ) \right\|_{\mathrm{F}} + \delta_{\tilde X} + \delta_X + \frac{1}{\gamma} \delta_{\tilde V} \\
    &\leq \frac{\gamma}{K} \left(2\left\|\tilde X^0-X^*\right\|_{\mathrm{F}}+\left\|\tilde X^K-X^*\right\|_{\mathrm{F}} + \sqrt{\frac{2K}{\gamma}( e^{(1)}+e^{(2)}) }\right) \\
    &\quad + \delta_{\tilde X} + \delta_X + \frac{1}{\gamma} \delta_V. \\
    \end{aligned}
\end{equation}
This relation implies that~\eqref{app:eq: ergodic convergence of QADMM for SDP 2} holds.
Moreover, letting $z=\left(X^*, y^*, S^*\right)$ in~\eqref{app:eq: sum of primal-dual gap function} and using the fact that 
$$
\begin{aligned}
\frac{1}{K} \sum_{k=1}^K Q\left(z^K,\left(X^*, y^*, S^*\right)\right) & \geq Q\left(\bar{z}^K,\left(X^*, y^*, S^*\right)\right) \\
& =f\left(\bar{y}^K\right)-f\left(y^*\right)+\langle X^*, \mathcal{A}^* (\bar{y}^k)+ \bar{S}^K-C\rangle ,
\end{aligned}
$$
we have
$$
\begin{aligned}
 f\left(\bar{y}^K\right)-f\left(y^*\right)
& \leq \frac{1}{2 K}\left(\gamma\left\|\tilde X^0 - X^*\right\|_{\mathrm{F}}^2-\gamma\left\|\tilde X^K - X^*\right\|_{\mathrm{F}}^2+\frac{1}{\gamma}\left\|\tilde S^0-S^*\right\|_{\mathrm{F}}^2-\frac{1}{\gamma}\left\| \tilde S^K-S^*\right\|_{\mathrm{F}}^2\right)\\
&\quad  + e^{(1)} + e^{(2)} + \|X^*\|_{\mathrm{F}} \left\| \mathcal{A}^* (\bar{y}^k)+ \bar{S}^K-C\right\|_{\mathrm{F}} .
\end{aligned}
$$
Combining this with~\eqref{app:eq: violation of average iterate} gives~\eqref{app:eq: ergodic convergence of QADMM for SDP 1}.
This completes the proof.

\end{proof}

\begin{corollary}
    \label{app:cor: ergodic convergence of QADMM for SDP 2}
    Let the initial point be $(X^0, y^0, S^0) = (0, 0, 0)$. Then the output of QADMM satisfies
    \begin{equation}
    \begin{aligned}
    f({y}_{out}) - f(y^*) &
    \leq \mathcal{O}\left(\frac{R_X^2+R_S^2}{K} + \delta \right) ,
    \\
     \left\|\mathcal{A}^*( y_{out} ) + S_{out}-C\right\|_2 & \leq
     \mathcal{O}\left(\frac{R_X+R_S}{K} + \delta  \right) .
    \end{aligned} 
    \end{equation}
\end{corollary}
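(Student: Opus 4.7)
The corollary is essentially a specialization of Theorem A.1 together with a short bookkeeping step that converts the averaged quantum-representation iterate $(\bar X^K,\bar y^K,\bar S^K)=\frac{1}{K}\sum_{k=0}^{K-1}(\tilde X^k,\hat y^k,\tilde S^k)$ appearing in the theorem into the classical averaged output $(X_{out},y_{out},S_{out})=\frac{1}{K}\sum_{k=1}^{K}(X^k,y^k,S^k)$ produced by Algorithm~\ref{alg:QADMM-for-SDP}. First I would plug the initialization $(X^0,y^0,S^0)=(0,0,0)$ directly into the bounds of Theorem~\ref{app:thm: ergodic convergence of QADMM for SDP}. Since $X^*\in\mathcal{X}$ and $S^*\in\mathcal{S}$, the assumed a priori radii give $\|X^0-X^*\|_F=\|X^*\|_F\le R_X$ and $\|S^0-S^*\|_F=\|S^*\|_F\le R_S$, so
\begin{equation*}
f(\bar y^K)-f(y^*)\le \mathcal{O}\!\left(\tfrac{R_X^2+R_S^2}{K}+\delta\right),\qquad \|\mathcal{A}^*(\bar y^K)+\bar S^K-C\|_F\le \mathcal{O}\!\left(\tfrac{R_X+R_S}{K}+\delta\right).
\end{equation*}

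Next, I would quantify the per-iterate discrepancy between the classical (projected) iterates and the quantum intermediates. Since $y^*\in\mathcal{Y}$, the projection $\mathrm{Proj}_{\mathcal{Y}}$ is nonexpansive toward any point of $\mathcal{Y}$, and in particular
$\|y^{k+1}-\hat y^{k+1}\|_2\le \|\tilde y^{k+1}+\Delta y^{k+1}-\hat y^{k+1}\|_2\le \delta_{\tilde y}+\delta_y$
provided $\hat y^{k+1}$ is feasible for $\mathcal{Y}$ (which follows from the structure of the ADMM update and the assumed bound on $\|y^*\|_1$; otherwise a standard one-shot enlargement of the projection radius handles it). The same argument applied componentwise yields $\|S^{k+1}-\tilde S^{k+1}\|_F\le\delta_S$ and $\|X^{k+1}-\tilde X^{k+1}\|_F\le\delta_X$. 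Averaging and using the triangle inequality, I obtain
\begin{equation*}
\|y_{out}-\bar y^K\|_2=\mathcal{O}(\delta),\quad \|S_{out}-\bar S^K\|_F=\mathcal{O}(\delta),\quad \|X_{out}-\bar X^K\|_F=\mathcal{O}(\delta),
\end{equation*}
where $\delta=\Theta(\delta_{\tilde y}+\delta_y+\delta_V+\delta_S+\delta_X)$ is the same aggregate error that already appears in Theorem~\ref{app:thm: ergodic convergence of QADMM for SDP}.

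Finally I would transfer these bounds to the objective gap and the constraint residual. Linearity of $f(y)=-b^\top y$ together with $\|b\|_2=\mathcal{O}(1)$ (normalization) gives $|f(y_{out})-f(\bar y^K)|\le\|b\|_2\|y_{out}-\bar y^K\|_2=\mathcal{O}(\delta)$, hence the objective bound follows by the triangle inequality. For the feasibility residual I would write
\begin{equation*}
\|\mathcal{A}^*(y_{out})+S_{out}-C\|_F\le \|\mathcal{A}^*(\bar y^K)+\bar S^K-C\|_F+\|\mathcal{A}^*\|\,\|y_{out}-\bar y^K\|_2+\|S_{out}-\bar S^K\|_F,
\end{equation*}
and bound $\|\mathcal{A}^*\|$ by a constant via the normalization $\|A^{(i)}\|_F\le 1$. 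Combining these with the two displayed inequalities above yields the claimed rates.

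The main obstacle I foresee is merely the technical step of certifying that $\hat y^{k+1}$, $\hat S^{k+1}$, and $\hat X^{k+1}$ lie in the respective feasible boxes $\mathcal{Y},\mathcal{S},\mathcal{X}$ so that nonexpansiveness of the projection can be invoked cleanly; if this fails one must instead use $\|\mathrm{Proj}_{\mathcal{Y}}(u)-v\|\le\|u-v\|+\mathrm{dist}(v,\mathcal{Y})$ and show the distance of the intermediate iterates from the feasible set is $O(\delta)$ by induction. Everything else is algebra in the $\mathcal{O}$ notation, and no new constants appear beyond those already tracked inside $\delta$.
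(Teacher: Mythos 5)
Your proposal is correct and follows essentially the same route as the paper's own (very terse) proof: instantiate Theorem~\ref{app:thm: ergodic convergence of QADMM for SDP} with the zero initialization so that $\|X^0-X^*\|_F\le R_X$ and $\|S^0-S^*\|_F\le R_S$, bound $\|y_{out}-\bar y^K\|$ and $\|S_{out}-\bar S^K\|$ by $\mathcal{O}(\delta)$ using the per-step error tolerances, and transfer these to the objective gap and residual. You are in fact more careful than the paper, which simply asserts the discrepancy bounds without addressing the projection-feasibility point you flag.
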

\begin{proof}
    Since $\| y_{out} - \bar y^K \|_2 \leq \delta_{\tilde y} + \delta_y$ and $\| S_{out} - \bar S^K \|_2 \leq \delta_S$, the desired result follows from Theorem~\ref{app:thm: ergodic convergence of QADMM for SDP}.
\end{proof}

\begin{corollary}
    Assume $ \delta = \Theta(\epsilon_{\mathrm{abs}})$ and let the initial point be $(X^0, y^0, S^0) = (0, 0, 0)$.
    To achieve an $\epsilon_{\mathrm{abs}}$-accuracy solution $(y_{out}, S_{out})$ satisfying
    $
    - b^\top y_{out} + b^\top y^* \leq \epsilon_{\mathrm{abs}}
    $ and $\|A^*(y_{out}) + S_{out} - C\|_2 \leq \epsilon_{\mathrm{abs}}$,
    QADMM requires 
    $$
    K = \mathcal{O}\left( \frac{R_X^2 + R_S^2}{\epsilon_{\mathrm{abs}}} \right) 
    $$
    iterations, where $R_X$ and $R_S$ are the upper bounds of $\| X^*\|_{\mathrm{F}}$ and $\| S^* \|_{\mathrm{F}}$, respectively.
\end{corollary}
\begin{proof}
    This follows from Corollary~\ref{app:cor: ergodic convergence of QADMM for SDP 2} and the fact that $\delta = \Theta(\epsilon_{\mathrm{abs}})$.
\end{proof}

\subsection{Complexity of QADMM for SDP}

\begin{theorem}
    \label{app:thm: complexity of QADMM for SDP}
    With the accuracy choice ${\delta_{\hat y}, \delta_{\tilde y}, \delta_y, \delta_V, \delta_S, \delta_X, \delta_{\tilde S}, \delta_{\tilde X}} = \Theta(\epsilon_{\mathrm{abs}})$ and $d = \mathcal{O}(\frac{1+R_X+R_y}{\epsilon_{\mathrm{abs}}})$,
    a quantum implementation of QADMM under the polynomial-size QRAM data-access model described in the main text produces an $\epsilon_{\mathrm{abs}}$-accuracy solution using at most $\mathcal{O}\left( (s_A + n^2)  \frac{R_X^2 + R_S^2}{\epsilon_{\mathrm{abs}}} \right)$ classical operations and
    $$
    \widetilde{\mathcal{O}}\left(\left(m \kappa_A^2( 1 + R_y)^2 + n^2 ({\kappa_A^2}( 1 + R_y) + R_X)\right)\frac{(R_X+R_S)^3}{\epsilon_{\mathrm{abs}}^3}\right)
    $$
    quantum gates and queries.
\end{theorem}
\begin{proof}
    The result follows from the complexity of each iteration of QADMM for SDP and the number of iterations in Corollary~\ref{app:cor: ergodic convergence of QADMM for SDP 2}.
\end{proof}

\bibliographystyle{quantum}
\bibliography{ref}

\end{document}